\newcommand{\footremember}[2]{%
    \footnote{#2}
    \newcounter{#1}
    \setcounter{#1}{\value{footnote}}%
}
\title{Stochastic integration with respect to \\ canonical $\alpha$-stable cylindrical Lévy processes}
\author{
  Gergely Bodó \\
  \and
  Markus Riedle\footremember{trailer}{markus.riedle@kcl.ac.uk}\footremember{alley2}{2nd affiliation: Institute of Mathematical Stochastics, Faculty of Mathematics,	TU Dresden,   01062 Dresden, Germany}
}
\date{%
    Department of Mathematics\\%
    King's College London\\%
    London WC2R 2LS\\%
    United Kingdom\\[2ex]%
    \today
}
\newtheorem{theorem}{Theorem}[section]
\newtheorem{corollary}[theorem]{Corollary}
\newtheorem{lemma}[theorem]{Lemma}
\newtheorem{definition}[theorem]{Definition}
\newtheorem{remark}[theorem]{Remark}
\newtheorem{proposition}[theorem]{Proposition}
\newcommand{\norm}[1]{\left\lVert#1\right\rVert}
\newcommand{\normm}[1]{{\left\vert\kern-0.25ex\left\vert\kern-0.25ex\left\vert #1 
    \right\vert\kern-0.25ex\right\vert\kern-0.25ex\right\vert}}
\numberwithin{equation}{section}
\DeclareMathOperator{\Borel}{{\mathfrak B}}
\renewcommand{\d}{{\mathrm d}}
\newcommand{\scapro}[2]{\langle #1,#2\rangle}  
\DeclareMathOperator{\R}{{\mathbb R}}
\renewcommand{\Phi}{F}
\newcommand{\1}{\mathbbm{1}} 
\renewcommand{\phi}{\varphi}
\newcommand{\itemEq}[1]{%
	\begingroup%
	\setlength{\abovedisplayskip}{0pt}%
	\setlength{\belowdisplayskip}{0pt}%
	\parbox[c]{\linewidth}{\begin{flalign}#1&&\end{flalign}}%
	\endgroup}
\providecommand{\keywords}
{
  \textbf{\textit{Keywords and phrases:}}
}
\providecommand{\amssubj}
{
  \textbf{\textit{AMS 2010 subject classification:}}
}
\begin{document}

\maketitle

\begin{abstract} 
In this work,  we introduce a theory of stochastic integration with respect to symmetric $\alpha$-stable cylindrical L\'evy processes. Since $\alpha$-stable cylindrical L\'evy processes do not enjoy a semi-martingale decomposition, our approach is based on a decoupling inequality for the tangent sequence of the Radonified increments. This approach enables us to characterise  the largest space of predictable Hilbert-Schmidt operator-valued processes which are integrable with respect to an $\alpha$-stable cylindrical L\'evy process as the collection of all predictable processes with paths in the Bochner space $L^\alpha$. We demonstrate the power and robustness of the developed theory by establishing a dominated convergence result allowing the interchange of the stochastic integral and limit. 
\end{abstract}

\begin{flushleft}
\amssubj{60H05, 60G20, 60G52, 28C20}

\keywords{cylindrical Lévy process, stochastic integration, stable processes, decoupled
tangent sequence}
\end{flushleft}

\section{Introduction}

Symmetric $\alpha$-stable distributions are popular for modelling random perturbations in the Euclidean space; see e.g. Samorodnitsky and Taqqu \cite{Samorodnitsky-Taqqu}.  This 
is because they are natural, discontinuous and non-Gaussian generalisations of Brownian motions  meeting various empirical requests, such as heavy tails, self-similarity and infinite variance. As perturbations of infinite dimensional systems, such as partial differential equations, models with stable distributions can usually only be found realised as random fields but not as stochastic processes in a Hilbert or Banach space.  This fact is not surprising since a random noise with a symmetric $\alpha$-stable distribution does not exist as an ordinary  Hilbert space-valued process but only in the generalised sense of  Gel'fand and Vilenkin \cite{Gelfand_Vilenkin} or  Segal \cite{Segal} as cylindrical processes. This is analogous to the standard Brownian motion in an infinite dimensional Hilbert space which only exists as a cylindrical process. 

The purpose of this work is to provide a comprehensive and robust theory of stochastic integration for predictable processes with respect to symmetric $\alpha$-stable cylindrical processes, which will lay the foundation for the application of these processes as a model of random perturbations of infinite dimensional systems. We do not only introduce the stochastic integral but also characterise the largest set of predictable Hilbert-Schmidt-valued processes which are integrable with respect to a symmetric $\alpha$-stable cylindrical process.
Including predictable integrands in the theory of stochastic integration is important, as solutions of stochastic partial differential equation driven by cylindrical L\'evy processes do not necessarily have c\`adl\`ag trajectories; see e.g. Brze\'zniak et al.\ \cite{Brzezniak_et_al}. 
We demonstrate the power of the developed integration theory by establishing a dominated convergence result allowing the interchange of the stochastic integral and limit. 

The classical approach to stochastic integration in finite-dimensional spaces is based on the semi-martingale decomposition; see Dellacherie and Meyer \cite{Dellacherie-Meyer-B}. The reverse approach, starting with good integrators, is introduced in  Protter \cite{protter_2005} or in Kurtz and Protter \cite{Kurtz-Protter} for an infinite dimensional setting; however, extending the space from adapted, c\`agl\`ad  to arbitrary predictable integrands in Protter \cite{protter_2005} also depends on the semi-martingale decomposition. A completely different approach is introduced in Bichteler \cite{bichteler_2002}, where the construction of the stochastic integral for real-valued integrands and integrators mimics that of the Daniell integral in calculus. Another approach defines the stochastic integral as a vector-valued random measure; 
see e.g. Métivier and Pellaumail \cite{metivier_michel_pellaumail_1980} or Rao \cite{Rao-2012}. A decoupling inequality for tangent sequences is the foundation for stochastic integration developed by Kwapie\'n and Woyczy\'nski in \cite{kwapien_woyczynski_1986} and \cite{K_W_1987}; their approach also allows for the characterisation of the largest 
space of predictable integrands as a randomised Musielak-Orlicz space. This approach is extended to Hilbert space-valued semi-martingales in Nowak \cite{nowak_2003}.  

Turning to stochastic integration with respect to  cylindrical processes, an extensive theory has been developed 
for stochastic integrals with respect to cylindrical Brownian motions, which has recently been  extended even to UMD Banach spaces; see van Neerven \cite{Jan-annals}. Surprisingly, stochastic integration with respect to other cylindrical processes than cylindrical Brownian motion is much less considered.  In fact, only  with respect to cylindrical martingales a stochastic integration theory is developed either by following a Dol\'eans measure approach by M\'etivier and Pellaumail in \cite{MetivierPellcylindrical} and \cite{metivier_michel_pellaumail_1980}, or by constructing a family of reproducing kernel Hilbert spaces in  Mikulevi\v{c}ius and Rozovski\v{\i} in \cite{MikRoz98} and \cite{MikRoz99}.
For the special case of a cylindrical L{\'e}vy process with finite weak second moments, an It{\^o} approach to  stochastic integration is developed in Riedle \cite{Riedle14}. The reason for the restriction to cylindrical martingales is due to the fact that cylindrical semi-martingales do not enjoy a semi-martingale decomposition, and thus the classical approach to stochastic integration, mentioned above, is not applicable. This obstacle is overcome in Jakubowski and Riedle \cite{jakubowski_riedle_2017}, in which a stochastic integral for adapted, c\`agl\`ad integrands and arbitrary cylindrical L\'evy processes is introduced. The approach is based on a decoupling inequality for tangent sequences, but seems neither to allow an extension to include predictable integrands nor to establish some powerful limit theorems. 

The approach in the current work also relies on a decoupling inequality for tangent sequences, but we characterise convergence of the  stochastic integral in the semi-martingale topology to be equivalent to convergence in probability in the space of random variables with values in a Bochner space. This approach originates from Kwapie\'n and Woyczy\'nski \cite{kwapien_1992}, in which the equivalent topology is defined on the space of random variables with values in a certain Musielak-Orlicz space. This equivalent description of a topology  for the convergence of the  stochastic integral allows the characterisation of the largest space of predictable integrands. 

The paper is organised as follows: we summarise some preliminaries on genuine and cylindrical L\'evy processes in Section 2. The theory of stochastic integration for deterministic integrands is introduced in Section 3. The following Section 4 is devoted to providing an approximation result for the predictable integrands under consideration. In Section 5, we explicitly construct the tangent sequence of the Radonified increments, which is 
fundamental for our approach. In the final Section 6, we state our main result on the equivalent description of the largest space of predictable integrands as a Bochner space $L^\alpha$, and provide its proof. We finish the  section with a result on dominated convergence.

\section{Preliminaries}

Let $G$ and $H$ be separable Hilbert spaces with inner products $\langle \cdot,\cdot\rangle$ and corresponding norms $\norm{\cdot}$. Let  $(a_k)_{k\in \mathbb{N}}$ and $(b_k)_{k\in \mathbb{N}}$ be  orthonormal bases of $G$ and $H$, respectively. We identify the dual of a Hilbert space by the space itself. The Borel $\sigma$-algebra of $H$ is denoted by $\Borel(H)$ and the open unit ball by $B_H:=\{h\in H: \norm{h}<1\}$.\par
The Banach space of bounded linear operators from $G$ to $H$ will be denoted by $L(G,H)$ with the operator norm $\norm{\cdot}_{G \rightarrow H}$. Its subspace $L_2(G,H)$ of Hilbert-Schmidt operators is endowed with the norm
$\norm{\Phi}_{\rm HS}^2:=\sum_{k=1}^{\infty}\norm{\Phi a_k}^2$ for $\Phi \in L_2(G,H)$.
We denote by $L_{\rm Leb}^{\alpha}([0,T],L_2(G,H))$ the space of Borel measurable functions $f:[0,T]\rightarrow L_2(G,H)$ that satisfy the condition $\int_0^T \norm{f(t)}_{\rm HS}^{\alpha}\, {\rm d}t < \infty$. For $\alpha\ge 1$, this is a Banach space
if equipped with the usual norm $\norm{f}_{L^\alpha}^\alpha:=\int_0^T \norm{f(t)}_{\rm HS}^{\alpha} \, {\rm d}t $, and for $0<\alpha<1$ it is  a metric space under the translation invariant metric $d(f,g)=\int_0^T \norm{f(t)-g(t)}_{\rm HS}^{\alpha} \, {\rm d}t $. For ease of notation, we also use the notation $\norm{f}_{L^\alpha}$ to denote the metric $d(0,f)$ for $0<\alpha<1$. \par
Let $(\Omega,\Sigma,P)$ be a complete probability space. We will denote by $L_P^0(\Omega,H)$ the space of equivalence classes of measurable functions $X\colon \Omega \rightarrow H$, equipped with the topology of convergence in probability.\par
Let $S$ be a subset of $G$. For each $n \in \mathbb{N}$, elements $g_1,...,g_n \in S$ and Borel set $A \in \Borel(\mathbb{R}^n)$, we define
\[C(g_1,...,g_n;A):=\{g \in G: (\langle g,g_1 \rangle,...,\langle g,g_n \rangle)\in A\}.\]
Such sets are called cylindrical sets with respect to $A$. The set of all these cylindrical sets is denoted by  $\mathcal{Z}(G,S)$,  and  it is a $\sigma$-algebra if $S$ is finite and otherwise an algebra. We write 
$\mathcal{Z}(G)$ for $\mathcal{Z}(G,G)$\par
A set function $\mu: \mathcal{Z}(G)\rightarrow [0, \infty]$ is called a cylindrical measure on $\mathcal{Z}(G)$ if for each finite dimensional subset $S \subseteq G$, the restriction of $\mu$ to the $\sigma$-algebra $\mathcal{Z}(G,S)$ is a $\sigma$-additive measure. A cylindrical measure is said to be a cylindrical probability measure if $\mu(G)=1$.\par
A cylindrical random variable $X$ in $G$ is a linear and continuous mapping $X\colon G \rightarrow L_{P}^0(\Omega,\mathbb{R})$. It defines a cylindrical probability measure $\mu_X$ by 
\begin{align*}
	\mu_X\colon  \mathcal{Z}(G) \to [0,1],\qquad
	\mu_X(Z)=P\big( (Xg_1,\dots, Xg_n)\in A\big)
\end{align*}
for cylindrical sets $Z=C(g_1, ... , g_n; A)$. The cylindrical probability measure $\mu_X$ is called the {\em cylindrical distribution} of $X$.
We define the characteristic function of the cylindrical random variable $X$ by
\[\varphi_X\colon G \rightarrow \mathbb{C}, \qquad\varphi_X(g)=E\big[e^{iXg}\big].\]
Let $T\colon G\to H$ be a linear and continuous operator. By defining
\begin{align*}
	TX\colon H\to L_{P}^0(\Omega,\mathbb{R}), \qquad (TX)h=X(T^\ast h)
\end{align*}
we obtain a cylindrical random variable on $H$. In the special case when $T$ is a Hilbert-Schmidt operator and hence $0$-Radonifying by \cite[Th.\ VI.5.2]{vakhania_1981}, it follows from  \cite[Pr.\ VI.5.3]{vakhania_1981} that the cylindrical random variable $TX$ is induced by a genuine random variable $Y\colon \Omega\to H$, that is 
$(TX)h =\scapro{Y}{h}$  for all $h\in H$. The following result shows that the inducing random variable $Y$ continuously depends on the Hilbert-Schmidt operator. 
\begin{lemma}\label{le.conv_of_radonif_rv}
	Let $X$ be a cylindrical random variable and $(F_n)_{n \in \mathbb{N}}$ a sequence in $L_2(G,H)$ converging to $F$ in $\norm{\cdot}_{\rm HS}$. Then $(F_nX)_{n \in \mathbb{N}}$ converges to $FX$ in $L_P^0(\Omega,H)$.
\end{lemma}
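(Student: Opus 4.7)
The plan is to reduce the statement to showing that $(F-F_n)X\to 0$ in $L_P^0(\Omega,H)$. Since $F-F_n\in L_2(G,H)$, the cylindrical random variable $(F-F_n)X$ is induced by a genuine $H$-valued random variable $Z_n$ satisfying $\scapro{Z_n}{h}=X((F-F_n)^\ast h)$ almost surely for each $h\in H$, and by linearity of Radonification this $Z_n$ coincides with $FX-F_nX$ as a genuine random variable. Because the candidate limit is the constant $0$, convergence in probability is equivalent to convergence in distribution to $\delta_0$, and by Prokhorov's theorem it suffices to establish (i) pointwise convergence of the characteristic functions $\phi_{Z_n}$ to $1$, together with (ii) tightness of the sequence $(Z_n)$ on $H$.

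Step (i) is immediate. By the defining identity for $Z_n$,
\[
\phi_{Z_n}(h) \;=\; E\bigl[e^{iX((F-F_n)^\ast h)}\bigr] \;=\; \phi_X\bigl((F-F_n)^\ast h\bigr).
\]
Since $\norm{(F-F_n)^\ast h}\le \norm{F-F_n}_{\rm HS}\norm{h}\to 0$ and $\phi_X$ is continuous at the origin (which is just a reformulation of the continuity of $X\colon G\to L_P^0(\Omega,\mathbb{R})$), we obtain $\phi_{Z_n}(h)\to 1$ for every $h\in H$.

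For step (ii) I would first record the following quantitative form of the continuity of $X$: by linearity, for every $\eta>0$ there exists $M>0$ such that $P(|Xg|>M\norm{g})<\eta$ for every $g\in G\setminus\{0\}$. A singular value decomposition of $F-F_n$ produces an orthonormal sequence $(e_k^{(n)})_k$ in $G$ and non-negative $\sigma_k^{(n)}$ with $\sum_k(\sigma_k^{(n)})^2=\norm{F-F_n}_{\rm HS}^2$, and expansion of the Radonified random variable in the corresponding orthonormal sequence in $H$ yields $\norm{Z_n}^2=\sum_k(\sigma_k^{(n)})^2(Xe_k^{(n)})^2$ almost surely. Truncating the sum at the finitely many indices with the largest singular values (where a finite union bound controlled by the uniform tail estimate applies) and controlling the remaining tail by the smallness of $\sum_k(\sigma_k^{(n)})^2$, one can estimate $P(\norm{Z_n}>\epsilon)$ by a quantity that vanishes with $\norm{F-F_n}_{\rm HS}^2$.

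I expect step (ii) to be the main technical difficulty: the random variables $(Xe_k^{(n)})_k$ are in general not independent, so crude union bounds over all indices are inadequate, and the argument requires a careful interplay between the $\ell^2$-summability of the singular values and the quantitative continuity of $X$. Once the tightness has been established, Prokhorov's theorem together with (i) gives $Z_n\to 0$ in distribution, and since the limit is deterministic, this is equivalent to $Z_n\to 0$ in probability, completing the proof.
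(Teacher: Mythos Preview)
Your overall strategy---characteristic-function convergence plus tightness, then Prokhorov---is exactly the paper's. Step (i) is correct and is precisely the paper's observation that $\langle F_n X, h\rangle \to \langle FX, h\rangle$ in $L^0_P(\Omega,\R)$ by continuity of $X$. The difference lies entirely in how tightness is obtained. The paper does not argue directly: it invokes \cite[Pr.\ 5.3]{jakubowski_riedle_2017}, which says that for any compact $K\subseteq L_2(G,H)$ the family $\{\mu_X\circ T^{-1}:T\in K\}$ is relatively compact among Borel probability measures on $H$. Since the convergent sequence $(F_n)$ together with its limit is compact in $L_2(G,H)$, this gives relative compactness of the laws of $F_nX$, and a standard lemma (\cite[Le.\ 2.4]{jakubowski_1988}) then upgrades coordinate-wise convergence plus relative compactness to convergence in $L^0_P(\Omega,H)$.

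Your sketch of step (ii), by contrast, has a real gap. The expansion $\norm{Z_n}^2=\sum_k(\sigma_k^{(n)})^2(Xe_k^{(n)})^2$ is fine, but the proposed truncation does not close: the tail $\sum_{k>N}(\sigma_k^{(n)})^2(Xe_k^{(n)})^2$ cannot be controlled merely by the smallness of $\sum_{k>N}(\sigma_k^{(n)})^2$ together with the uniform estimate $P(|Xg|>M\norm{g})<\eta$. The random variables $Xe_k^{(n)}$ need have no finite moments and are not independent, so nothing prevents many of them from being large on the same event; a union bound over infinitely many indices is useless, and without second moments there is no Chebyshev-type inequality available to exploit the $\ell^2$-smallness of the coefficients. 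What you are attempting in step (ii) is in effect a uniform-over-compact-sets version of the $0$-Radonifying property of Hilbert--Schmidt operators, and that is exactly the nontrivial content of the result the paper cites. The cleanest fix is to replace your direct attempt at (ii) by that citation.
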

\begin{proof} Let $\mu_X$ denote the cylindrical distribution of $X$. 
As the sequence  $(F_n)_{n \in \mathbb{N}}$ is compact in $L_2(G,H)$, the collection of measures $\{\mu_X \circ F_n^{-1}: n \in \mathbb{N}\}$ is relatively compact in the space of probability measures on $\Borel(H)$; see \cite[Pr.\ 5.3]{jakubowski_riedle_2017}. Continuity of $X$ implies  for all $h \in H$ that
\begin{equation*}
    \lim_{n \rightarrow \infty}\langle F_n X, h \rangle= \lim_{n \rightarrow \infty} X(F_n^* h)= X(F^* h)=\langle FX, h \rangle\qquad\text{in }L^0_P(\Omega,\R).
\end{equation*}
Together with relative compactness, this implies that $(F_n X)_{n \in \mathbb{N}}$ converges to $FX$ in $L_P^0(\Omega,H)$; see e.g.\ \cite[Le.\ 2.4]{jakubowski_1988}. 
\end{proof}
A family $(L(t):t\geq 0)$ of cylindrical random variables $L(t)\colon G \rightarrow L_{P}^0(\Omega,\mathbb{R})$ is called a cylindrical Lévy process if for each $n \in \mathbb{N}$ and $g_1,...,g_n \in G$, the stochastic process
\[\big(\big(L(t)g_1,...,L(t)g_n\big): t \geq 0\big)\]
is a Lévy process in $\mathbb{R}^n$. The filtration generated by $(L(t):t\geq 0)$ is defined by
\[\mathcal{F}_t :=\sigma(\{L(s)g\colon g\in G, s\in [0,t]\}) \quad \text{for all}\; t\geq 0.\] 
Denote by $\mathcal{Z}_*(G)$ the collection
\[\big\{\{g \in G: (\langle g,g_1 \rangle,...,\langle g,g_n \rangle)\in B\}:n \in \mathbb{N},g_1,...,g_n \in G,B\in \Borel{B}(\mathbb{R}^n\setminus\{0\})\big\}\]
of cylindrical sets, which  forms an algebra of subsets of $G$. For fixed $g_1,...,g_n \in G$, let $\lambda_{g_1,\dots, g_n}$ be the L\'evy measure of $\big((L(t)g_1,...,L(t)g_n ): t \geq 0\big)$. Define a function 
$\lambda\colon  \mathcal{Z}_*(G) \rightarrow [0,\infty]$ by 
\begin{align*}
\lambda(C):=\lambda_{g_1,\dots, g_n}(B)\quad\text{for }
C=\{g\in G:\, (\langle g,g_1 \rangle,...,\langle g,g_n \rangle)\in B\},
\end{align*}
for $B\in\Borel(\R^n)$. It is shown in \cite{applebaum_riedle_2010} that $\lambda$ is well defined. The set function $\lambda$ is called the cylindrical L\'evy measure of $L$. 

In this paper, we restrict our attention to canonical $\alpha$-stable cylindrical Lévy processes. These are 
cylindrical Lévy processes with characteristic function $\phi_{L(t)}(g)=\exp(-t\norm{g}^\alpha)$ for each $t\ge 0$ and $g\in G$.
By \cite[Le.\ 2.4]{riedle_2018}, the cylindrical L\'evy measure $\lambda$ of the canonical $\alpha$-stable cylindrical Lévy processes satisfies the spectral 
representation
\begin{equation}\label{spectral_represenation_of_stable_levy}
    \lambda \circ \pi_{a_1,...,a_n}^{-1}(B)=\frac{\alpha}{c_{\alpha}}\int_{S_{\mathbb{R}^n}}\nu_n({\rm d}x) \int_0^{\infty} \mathbb{1}_B(r x)\frac{1}{r^{1+\alpha}}\,{\rm d}r \quad \text{for}\; B \in \Borel({\mathbb{R}^n}),
\end{equation}
where $S_{\mathbb{R}^n}:=\{\beta \in \mathbb{R}^n: \vert \beta \vert = 1\}$, $c_\alpha>0$ is a constant dependent only on $\alpha$ and $\nu_n$ denotes a uniform distribution on the sphere $S_{{\mathbb R}^n}$.

Infinitely divisible measures on a Hilbert space $H$ can be defined as in the Euclidean space; see \cite{parthasarathy_1967}. 
As in finite dimensions, infinitely divisible distributions are completely characterised by a triplet $(a,\rho,\lambda)$, where $a\in H$, the mapping $\rho\colon H\to H$ is nuclear and non-negative, and the Lévy measure  $\lambda$  is a $\sigma$-finite measure on $\Borel(H)$ satisfying
$\int_H \big(\norm{h}^2\wedge 1\big)\, \lambda({\rm d}h)<\infty$. Given any $\delta>0$ and Lévy measure $\lambda$ on $\Borel(H)$, we say that $\delta \in C(\lambda)$ if $\lambda(\{h \in H: \norm{h}=\delta\})=0$. A sequence of infinitely divisible measures $\mu_n=(a_n,\rho_n,\lambda_n)$ with associated sequence $(T_n)_{n \in \mathbb{N}}$ of $S$-operators $T_n\colon H\to H$, which are defined by
\begin{align*}
    \langle T_nh_1,h_2\rangle=\langle \rho_n h_1,h_2\rangle + \int_{\norm{h_1}\leq 1}\langle h_1,u \rangle \langle h_2, u\rangle \, \lambda_n({\rm d}u)\qquad\text{for all }h_1,h_2\in H,
\end{align*}
converges weakly to an infinitely divisible measure $\mu=(a,\rho,\lambda)$ if and only if the following conditions  hold:
\begin{enumerate}[\hspace{0.5cm}]
	\item \itemEq{(1)\;  \displaystyle a=\lim_{\substack{\delta \downarrow 1\\ \delta \in C(\lambda)}}\lim_{n \rightarrow \infty}\bigg(a_n+\int_{1<\norm{h}\leq\delta}h\;\lambda_n(dh)\bigg);}
	\item \itemEq{(2)\;   \displaystyle \lim_{\delta\downarrow 0} \limsup_{n\rightarrow \infty}
	\int_{\norm{h}\leq \delta} \langle h,u \rangle^2 \, \lambda_n({\rm d}u)+(\rho_n h,h)=(\rho h,h) \text{ for all }h\in H; 	
	 \label{small_jumps_conv} }
	\item \itemEq{\displaystyle (3)\;  \lambda_n \to \lambda \text{ weakly outside of every closed neighbourhood of the origin;}\label{conv_of_levy_measure}}
	\item \itemEq{(4)\; \text{$(T_n)_{n \in \mathbb{N}}$ is compact in the space of nuclear operators.}} \label{comp_S_op}
\end{enumerate}
The necessity of these conditions can be found in \cite[Pr.\ 5.7.4]{linde_1987} and their sufficiency is an adaption of \cite[Th.\ 5.5]{parthasarathy_1967} to the case of a discontinuous truncation function. 

It is well known, even in finite dimensions, that the small jumps of the converging sequence $\mu_n$ may contribute to the Gaussian part of the limit distribution $\mu$; see \eqref{small_jumps_conv}. However, if we rule out this situation, we obtain the following result.
\begin{lemma} \label{measure_theoretic_lemma}
Let $\mu_n=(0,0,\lambda_n)$ be a sequence of infinitely divisible measures on $\Borel(H)$ converging weakly to $\mu=(0,0,\lambda)$. Then, it holds that
\[\lim_{n\rightarrow \infty}\int_H\Big(\norm{h}^2 \wedge 1\Big) \; \lambda_n\,({\rm d}h)=\int_H\Big(\norm{h}^2 \wedge 1\Big) \; \lambda\,({\rm d}h).\]
\end{lemma}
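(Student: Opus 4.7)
The plan is to split each integral at a small radius $\delta>0$, invoke condition (3) for the part on $\{\norm{h}>\delta\}$, and combine conditions (2) and (4) for uniform control on $\{\norm{h}\le\delta\}$. First I would write
\[
\int_H (\norm{h}^2\wedge 1)\,\lambda_n(\d h)=\int_{\norm{h}\le\delta}\norm{h}^2\,\lambda_n(\d h)+\int_{\norm{h}>\delta}(\norm{h}^2\wedge 1)\,\lambda_n(\d h).
\]
For the second summand, condition (3) yields weak convergence of the restrictions $\lambda_n|_{\{\norm{h}>\delta\}}$ to $\lambda|_{\{\norm{h}>\delta\}}$ as finite Borel measures; since $h\mapsto\norm{h}^2\wedge 1$ is bounded and continuous, this summand converges to $\int_{\norm{h}>\delta}(\norm{h}^2\wedge 1)\,\lambda(\d h)$, which in turn tends to $\int_H(\norm{h}^2\wedge 1)\,\lambda(\d h)$ as $\delta\downarrow 0$ by dominated convergence, using that L\'evy measures integrate $\norm{h}^2\wedge 1$ by definition.

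The crux will be establishing the uniform bound
\[
\lim_{\delta\downarrow 0}\sup_{n\in\mathbb{N}}\int_{\norm{h}\le\delta}\norm{h}^2\,\lambda_n(\d h)=0.
\]
For this I would introduce the non-negative nuclear operator $T_n^\delta$ defined by
\[
\scapro{T_n^\delta h_1}{h_2}=\int_{\norm{u}\le\delta}\scapro{h_1}{u}\scapro{h_2}{u}\,\lambda_n(\d u),
\]
so that $\sum_k\scapro{T_n^\delta b_k}{b_k}=\int_{\norm{u}\le\delta}\norm{u}^2\,\lambda_n(\d u)$ by monotone convergence and $T_n^\delta\le T_n$ in the operator order. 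The linear functionals $T\mapsto\sum_{k>N}\scapro{Tb_k}{b_k}$ are continuous in the nuclear norm and decrease pointwise to zero on any non-negative nuclear operator as $N\to\infty$, so Dini's theorem applied on the relatively compact set $\{T_n:n\in\mathbb{N}\}$ supplied by condition (4) gives $\lim_{N\to\infty}\sup_n\sum_{k>N}\scapro{T_nb_k}{b_k}=0$.

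Given $\epsilon>0$, I would fix $N$ so this tail supremum is below $\epsilon/2$. Condition (2), which with $\rho_n=\rho=0$ reads $\lim_{\delta\downarrow 0}\limsup_n\scapro{T_n^\delta h}{h}=0$ for each $h\in H$, applied to the finitely many vectors $b_1,\ldots,b_N$, then yields $\delta_0>0$ and $n_0\in\mathbb{N}$ with $\sum_{k\le N}\scapro{T_n^\delta b_k}{b_k}<\epsilon/2$ for all $\delta\le\delta_0$ and $n\ge n_0$. Adding the two bounds produces $\int_{\norm{h}\le\delta}\norm{h}^2\,\lambda_n(\d h)<\epsilon$ for such $n$ and $\delta$; the finitely many remaining $n<n_0$ are handled individually by monotone convergence on each $\lambda_n$, possibly shrinking $\delta_0$ further. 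The main obstacle I anticipate is precisely this step: condition (2) supplies only direction-wise smallness, and it is condition (4) together with the Dini argument that is indispensable for upgrading this to trace-wise smallness uniformly in $n$.
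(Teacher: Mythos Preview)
Your argument is correct. The splitting at radius~$\delta$, the use of condition~(3) for the outer part, and the combination of conditions~(2) and~(4) via a Dini argument on the relatively compact family $\{T_n\}$ of $S$-operators to force the inner part to vanish uniformly are all sound. One cosmetic point: for the restrictions $\lambda_n|_{\{\norm{h}>\delta\}}$ to converge weakly as finite measures you should take $\delta\in C(\lambda)$, i.e.\ $\lambda(\{\norm{h}=\delta\})=0$; such $\delta$ are dense in $(0,1)$, so this does not affect the limiting scheme.

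Your route is genuinely different from the paper's. The paper first reduces to the symmetric case by a symmetrisation argument and then, instead of showing that the inner contribution is uniformly negligible, shows directly that it \emph{converges}: it observes that the truncated infinitely divisible measures $\tilde\mu_n=(0,0,\lambda_n|_{\bar B_H(\delta)})$ converge weakly to $\tilde\mu=(0,0,\lambda|_{\bar B_H(\delta)})$, invokes a lemma of Parthasarathy to obtain a uniform fourth-moment bound $\sup_n\int_H\norm{h}^4\,\tilde\mu_n(\d h)<\infty$, and then uses Skorokhod representation together with the de la Vall\'ee--Poussin criterion to upgrade weak convergence to convergence of second moments, which equals $\int_{\bar B_H(\delta)}\norm{h}^2\,\lambda_n(\d h)$. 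Your approach is more functional-analytic and self-contained: it stays at the level of the necessary conditions~(1)--(4) for weak convergence of infinitely divisible laws, avoids the symmetrisation step entirely, and does not call on Skorokhod or uniform integrability. The paper's approach, on the other hand, yields the slightly sharper statement that the inner integrals actually converge for each fixed admissible~$\delta$, not merely that they are uniformly small.
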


\begin{proof}
It is enough to prove the statement under the assumption that $\lambda_n$ is symmetric for all $n \in \mathbb{N}$, since the general case can be reduced back to this setting by a symmetrisation argument. Let $\delta \in C(\lambda)$ be such that $\delta \in (0,1)$. Then, it follows directly from Condition (\ref{conv_of_levy_measure}) that
\begin{align*} \label{outside_of_ball}
   \lim_{n\rightarrow \infty}\int_{\norm{h}>\delta}\Big(\norm{h}^2 \wedge 1 \Big)\; \lambda_n\,({\rm d}h)=\int_{\norm{h}>\delta}\Big(\norm{h}^2 \wedge 1 \Big)\; \lambda\,({\rm d}h).
\end{align*}
To establish the result for the integrals over the closed ball of radius $\delta$, note that the infinitely divisible measures $\tilde{\mu}_n=(0,0,\lambda_n\vert_{\Bar{B}_H(\delta)})$, where $\lambda_n\vert_{\Bar{B}_H(\delta)}=\lambda_n(\Bar{B}_H(\delta) \cap \,\cdot\,)$, converge weakly to the infinitely divisible measure $\tilde{\mu}=(0,0,\lambda\vert_{\Bar{B}_H(\delta)})$. Therefore, the set of measures $\{\tilde{\mu}_n\}_{n \in \mathbb{N}}$ is weakly compact and it follows from \cite[Le.\ VI.5.3]{parthasarathy_1967} that
\[\sup_{n \in \mathbb{N}}\int_H \norm{h}^4 \, \tilde{\mu}_n({\rm d}h)< \infty.\]
By the Skorokhod representation theorem and the Vallée-Poussin theorem \cite[Le.\ Th. II.22]{dellacherie_meyer_1978}, it follows as $n\to\infty$ that 
\begin{equation*}
  \int_{\Bar{B}_H(\delta)} \norm{h}^2 \, \lambda_n({\rm d}h)=\int_H \norm{h}^2 \, \tilde{\mu}_n({\rm d}h)\to \int_H \norm{h}^2 \, \tilde{\mu}({\rm d}h)=\int_{\Bar{B}_H(\delta)} \norm{h}^2 \, \lambda({\rm d}h),
\end{equation*}
which completes the proof.
\end{proof}

\section{Deterministic integrands}


The definition of the stochastic integral  for deterministic integrands with respect to a canonical $\alpha$-stable cylindrical L\'evy process $L$ depends on two classes of step functions. We give in the following a precise definition of what is meant by a step function.
\begin{definition} \hfill 
\begin{enumerate}
  \item[{\rm (1)}] An $L_2(G,H)$-valued step function is of the form
    \begin{align} \label{eq.det-step-HS}
		\psi \colon [0,T]\rightarrow L_2(G,H),\qquad \psi(t)=F_0\mathbb{1}_{\{0\}}(t)+\sum_{i=1}^{n-1} F_i \mathbb{1}_{(t_i,t_{i+1}]}(t),
	\end{align}
  where $0=t_1<\cdots < t_n=T$, $F_i \in L_2(G,H)$ for each $i \in \{0,...,n-1\}$. The space of $L_2(G,H)$-valued step functions is denoted by $\mathcal{S}_{\rm det}^{\rm HS}:=\mathcal{S}_{\rm det}^{\rm HS}(G,H)$.
  \item[{\rm (2)}] An $L(H,H)$-valued step function is of the form
  \begin{align}
		\gamma \colon [0,T]\rightarrow L(H,H),\qquad \gamma(t)=F_0\mathbb{1}_{\{0\}}(t)+\sum_{i=1}^{n-1} F_i \mathbb{1}_{(t_i,t_{i+1}]}(t),
	\end{align}
   where $0=t_1<\cdots < t_n=T$ and $F_i \in L(H,H)$ for each $i \in \{0,...,n-1\}$. The space of $L(H,H)$-valued step functions with $\sup_{t\in [0,T]}\norm{\gamma(t)}_{H \rightarrow H}\leq 1$ is denoted by $\mathcal{S}^{\rm 1,op}_{\rm det}:=\mathcal{S}^{\rm 1,op}_{\rm det}(H,H)$.
\end{enumerate}
\end{definition}
Consider an increment $L(t_{i+1})-L(t_i)$ of the cylindrical Lévy process $L$ and let $F_i \in L_2(G,H)$ for each $i \in \{1,...,n-1\}$. Since Hilbert-Schmidt operators are $0$-Radonifying by \cite[Th.\ VI.5.2]{vakhania_1981}, it follows from  \cite[Pr.\ VI.5.3]{vakhania_1981} that there exist a genuine random variables $\Phi_i\big(L(t_{i+1})-L(t_i)\big)\colon \Omega\to H$  for each $i \in \{1,...,n-1\}$  satisfying
\[(L(t_{i+1})-L(t_i))(\Phi_i^*h)=\langle \Phi_i(L(t_{i+1})-L(t_i)),h \rangle \quad \text{$P$-a.s.\ for all } h \in H.\]
We call the random variables  $\Phi_i\big(L(t_{i+1})-L(t_i)\big)$ for each $i \in \{1,...,n-1\}$ Radonified increments. 
The stochastic integral is defined for any $\psi \in \mathcal{S}_{\rm det}^{\rm HS}$ with representation (\ref{eq.det-step-HS}) as the sum of the Radonified increments 
\[I(\psi):=\int_0^T \psi \, \d L =\sum_{i=1}^{n-1} \Phi_i (L(t_{i+1})-L(t_i)).\]
Thus, the integral $I(\psi):\Omega\rightarrow H$ is a genuine $H$-valued random variable.

The following definition of the stochastic integral originates from the theory of vector measures, and was adapted to the probabilistic setting in \cite{urbanik_woyczynski_1967} by Urbanik and Woyczy{\'n}ski.

\begin{definition} \label{det_integrability}
A function $\psi\colon [0,T]\rightarrow L_2(G,H)$ is integrable if there exists a sequence $(\psi_n)_{n \in \mathbb{N}}$ of elements of $\mathcal{S}_{\rm det}^{\rm HS}$ satisfying
\begin{enumerate}[\rm(1)]
    \item $(\psi_n)_{n \in \mathbb{N}}$ converges to $\psi$ Lebesgue a.e.;  \label{det_int_def_1}
    \item $\displaystyle \lim_{m,n \rightarrow \infty}\sup_{\gamma \in \mathcal{S}^{\rm 1,op}_{\rm det}}E\Bigg[\norm{\int_0^T \gamma(\psi_m-\psi_n) \, \d L}\wedge 1 \Bigg]=0.$ \label{det_int_def_2}
\end{enumerate}
In this case,  the stochastic integral of the deterministic function $\psi$ is defined by
\[I(\psi):=\int_0^T \psi \, \d L:= \lim_{n\rightarrow \infty} \int_0^T \psi_n \, \d L \quad \text{in}\; L_P^0(\Omega,H).\]
The class of all deterministic integrable Hilbert-Schmidt operator-valued functions is denoted by $\mathcal{I}_{\rm det}^{\rm HS}:=\mathcal{I}_{\rm det}^{\rm HS}(G,H)$. 
\end{definition}

\begin{remark}\label{cadlag_remark}
If Conditions \eqref{det_int_def_1} and \eqref{det_int_def_2} in Definition \ref{det_integrability} are satisfied, then completeness of $L_P^0(\Omega,H)$ implies the existence of the limit. Furthermore, it follows that the integral process $(\int_0^t \psi \, \d L)_{t \geq 0}$, defined by $\int_0^t \psi\, \d L:=\int_0^T\1_{[0,t]}\psi\, \d L$ has c\'adl\'ag paths. To see this, note that for each $m, n \in \mathbb{N}$ the process $(\int_0^t \left(\psi_m - \psi_n \right)\, \d L)_{t \geq 0}$ has c\'adl\'ag paths. By an extension of \cite[Pr.\ 8.2.1]{kwapien_woyczynski_1992} to $H$-valued processes and Condition (\ref{det_int_def_2}) above, we obtain
\begin{multline*}
    \lim_{m,n \rightarrow \infty}P\Bigg(\sup_{0\leq t\leq T} \norm{\int_0^t \left(\psi_m - \psi_n\right)\, \d L} >\epsilon\Bigg)\\\leq 3 \lim_{m,n \rightarrow \infty} \sup_{0\leq t\leq T} P\Bigg( \norm{\int_0^t \left(\psi_m - \psi_n\right) \, \d L} >\frac{\epsilon}{3}\Bigg)=0.
\end{multline*}
By passing on to a suitable subsequence if necessary, we obtain that there exists a subsequence $(\int_0^t \psi_{n_k} \, \d L)_{k \in \mathbb{N}}$ that converges uniformly almost surely, which guarantees that the limiting process has c\'adl\'ag paths.
\end{remark}


The following is the main result of this section, which characterises the space $\mathcal{I}_{\rm det}^{\rm HS}$ of deterministic integrands, 
i.e.\ the set of functions $\psi \colon [0,T]\rightarrow L_2(G,H)$ that satisfy Definition \ref{det_integrability}
for the canonical $\alpha$-stable cylindrical L\'evy process $L$.

\begin{theorem} \label{det_if_and_only_if_integrable}
The space $\mathcal{I}_{\rm det}^{\rm HS}$ of deterministic functions  integrable with respect to the canonical $\alpha$-stable cylindrical L\'evy process in $G$ for $\alpha \in (0,2)$  coincides with $L_{\rm Leb}^{\alpha}\big([0,T],L_2(G,H)\big)$. 
\end{theorem}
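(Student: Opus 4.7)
The plan is to characterize $\mathcal{I}^{HS}_{\rm det}$ by proving a quantitative comparison, on step functions, between the quantity $\sup_\gamma E[\|I(\gamma\psi)\|\wedge 1]$ appearing in Definition \ref{det_integrability}(2) and the Bochner norm $\|\psi\|_{L^\alpha}$. Once both sides are controlled by the same power of $\|\psi\|_{L^\alpha}$, the theorem reduces to a standard density argument: step functions are dense in $L^\alpha([0,T],L_2(G,H))$, so any $\psi\in L^\alpha$ admits a step approximation $\psi_n\to\psi$ in $L^\alpha$ and (after a subsequence) a.e., and the upper bound supplies Definition \ref{det_integrability}(2); conversely, for any $\psi\in\mathcal{I}^{HS}_{\rm det}$ a witnessing step sequence is Cauchy in $L^\alpha$ by the lower bound, so its $L^\alpha$-limit agrees a.e.\ with $\psi$ and hence $\psi\in L^\alpha$.

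The calculation starts from the observation that for a step function $\psi=\sum_i F_i\mathbb{1}_{(t_i,t_{i+1}]}$, independence of the increments of $L$ combined with $\varphi_{L(t)}(g)=\exp(-t\|g\|^\alpha)$ yields $\varphi_{I(\psi)}(h)=\exp\!\left(-\int_0^T\|\psi(t)^\ast h\|^\alpha\,\d t\right)$, so $I(\psi)$ is an $H$-valued symmetric $\alpha$-stable random variable. The key tool is the subordination representation $F_i(L(t_{i+1})-L(t_i))\stackrel{d}{=}(t_{i+1}-t_i)^{1/\alpha}\sqrt{S_i}\,F_iB_i$, with iid positive $\alpha/2$-stable $S_i$, iid cylindrical standard Gaussian $B_i$ on $G$, and all mutually independent. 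Conditional on $(S_i)$, $I(\psi)$ is centred Gaussian on $H$ with covariance trace $\sum_i(t_{i+1}-t_i)^{2/\alpha}S_i\|F_i\|_{\rm HS}^2$. Combining the Gaussian moment equivalence $E[\|X\|^p]\asymp(\operatorname{tr}\operatorname{Cov} X)^{p/2}$ with the fact that a weighted sum $\sum_i c_iS_i$ of iid $\alpha/2$-stables is itself $\alpha/2$-stable with scale $(\sum_i c_i^{\alpha/2})^{2/\alpha}$, one obtains the clean identity $E[\|I(\psi)\|^p]\asymp\|\psi\|_{L^\alpha}^p$ for every $p\in(0,\alpha)$ (because $c_i^{\alpha/2}=(t_{i+1}-t_i)\|F_i\|_{\rm HS}^\alpha$ sums to $\|\psi\|_{L^\alpha}^\alpha$), with constants depending only on $p$ and $\alpha$.

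The required bounds follow at once. Since $\|\gamma\psi\|_{L^\alpha}\le\|\psi\|_{L^\alpha}$ uniformly in $\gamma\in\mathcal{S}^{1,{\rm op}}_{\rm det}$, choosing $p\in(0,\alpha)\cap(0,1]$ and using $x\wedge 1\le x^p$ gives the upper bound $\sup_\gamma E[\|I(\gamma\psi)\|\wedge 1]\le C\|\psi\|_{L^\alpha}^p$. For the lower bound, take $\gamma\equiv I_H$ and use the heavy-tail asymptotic $P(\|I(\psi)\|>t)\sim C_\alpha t^{-\alpha}\|\psi\|_{L^\alpha}^\alpha$ as $t\to\infty$ (again from subordination, via $P(S_i>r)\sim c\,r^{-\alpha/2}$ and $E[\|F_iB_i\|^\alpha]\asymp\|F_i\|_{\rm HS}^\alpha$) to deduce $\sup_\gamma E[\|I(\gamma\psi)\|\wedge 1]\ge E[\|I(\psi)\|\wedge 1]\ge P(\|I(\psi)\|>1)\gtrsim\min(\|\psi\|_{L^\alpha}^\alpha,c)$ for a positive constant $c$. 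Together these show $\sup_\gamma E[\|I(\gamma(\psi_m-\psi_n))\|\wedge 1]\to 0$ if and only if $\|\psi_m-\psi_n\|_{L^\alpha}\to 0$, completing the argument.

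The main technical obstacle is producing the \emph{correct} quantitative match with the Hilbert--Schmidt-based Bochner norm. A naive attack through the Lévy measure of $I(\psi)$ -- for instance, through the spectral mass $\int_H(\|h\|^2\wedge 1)\,\eta_{I(\psi)}\,(\d h)$ -- does \emph{not} reproduce $\|\psi\|_{L^\alpha}^\alpha$: for $\psi=I_n\mathbb{1}_{(0,T]}$ the former is independent of $n$ whereas the latter equals $Tn^{\alpha/2}$. The HS norm emerges only through the Gaussian side of the subordination, which is precisely why the argument must first condition on the subordinators $(S_i)$ and only then exploit the $\alpha/2$-stable scaling $(\sum_i c_i^{\alpha/2})^{2/\alpha}$. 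Careful bookkeeping of the constants in the moment and tail estimates across the regimes $\alpha\in(0,1]$ and $\alpha\in(1,2)$ forms the bulk of the technical work.
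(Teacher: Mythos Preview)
Your subordination approach is correct and provides a genuine alternative to the paper's proof. Both arguments establish the same core equivalence on step functions---that $\sup_\gamma E[\|I(\gamma\psi_n)\|\wedge 1]\to 0$ if and only if $\|\psi_n\|_{L^\alpha}\to 0$---and then conclude by the same density argument. The upper bounds are essentially identical: the paper cites the moment inequality $E[\|I(\psi)\|^p]\le c_{\alpha,p}\|\psi\|_{L^\alpha}^p$ from an external source, while you derive it from subordination; either way Markov finishes. The lower bounds differ. The paper observes that if $I(\psi_n)\to 0$ in probability then, by continuity of the map from infinitely divisible laws to their L\'evy data (Lemma~\ref{measure_theoretic_lemma}), $\int_H(\|h\|^2\wedge 1)\,\eta_{I(\psi_n)}(\d h)\to 0$, and shows separately (Lemma~\ref{L_alpha_smaller_than_modular}) that this integral dominates $c_\alpha^{-1}\|\psi_n\|_{L^\alpha}^\alpha$. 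You instead extract a uniform tail lower bound directly from the subordination. Your route is more self-contained; the paper's route stays within the L\'evy--Khintchine framework and reuses general facts about weak convergence of infinitely divisible measures.

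Two remarks. First, your lower bound as written leans on the \emph{asymptotic} $P(\|I(\psi)\|>t)\sim C_\alpha t^{-\alpha}\|\psi\|_{L^\alpha}^\alpha$ evaluated at $t=1$, which does not by itself give a uniform bound across all $\psi$. What you actually need (and what your ingredients deliver) is the non-asymptotic inequality: by Paley--Zygmund applied to the conditional Gaussian, $P(\|I(\psi)\|>1\mid(S_i))\ge c_0$ on $\{V>2\}$ with $V=\operatorname{tr}\operatorname{Cov}(I(\psi)\mid(S_i))$, and since $V\stackrel{d}{=}\|\psi\|_{L^\alpha}^2 W$ for a fixed positive $\alpha/2$-stable $W$, one has $P(V>2)\ge c\min(\|\psi\|_{L^\alpha}^\alpha,1)$ uniformly. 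This is straightforward but should replace the appeal to a limiting asymptotic.

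Second, your final paragraph is factually incorrect and should be dropped. For $\psi=I_n\1_{(0,T]}$ one computes, using the spectral representation \eqref{spectral_represenation_of_stable_levy} and the normalisation of $\nu_n$ that appears in the proof of Lemma~\ref{L_alpha_smaller_than_modular}, that $\int_H(\|h\|^2\wedge 1)\,\eta_{I(\psi)}(\d h)$ is a dimension-independent constant times $T\,\nu_n(S_{\mathbb R^n})\asymp T\,n^{\alpha/2}$, which \emph{does} track $\|\psi\|_{L^\alpha}^\alpha=T n^{\alpha/2}$. The L\'evy-measure route is not an obstacle; indeed it is precisely the paper's method, yielding the one-sided inequality of Lemma~\ref{L_alpha_smaller_than_modular}, which is all that is needed for the lower bound.
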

The rest of this chapter is devoted to proving the above theorem which is divided into a few lemmas in the following. 
\begin{lemma} \label{L_alpha_smaller_than_modular}
Let $L$ be the canonical $\alpha$-stable cylindrical L\'evy process  in $G$ with cylindrical L\'evy measure $\lambda$.  Then there exists a constant $c_{\alpha}>0$ such that
\[\int_0^T \norm{\psi(t)}_{\rm HS}^{\alpha}\, {\rm d}t\leq c_{\alpha}\int_0^T\int_H \big(\norm{h}^2\wedge 1\big)\, (\lambda \circ \psi(t)^{-1}) ({\rm d}h) \, {\rm d} t\]
for all measurable functions $\psi:[0,T]\rightarrow L_2(G,H)$.
\end{lemma}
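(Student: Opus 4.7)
The plan is to establish the pointwise claim
\begin{equation*}
\norm{F}_{\rm HS}^\alpha \leq c_\alpha \int_H\big(\norm{h}^2\wedge 1\big)\,(\lambda\circ F^{-1})({\rm d}h), \qquad F\in L_2(G,H), \tag{$\star$}
\end{equation*}
for a constant $c_\alpha$ depending only on $\alpha$, after which the lemma follows by Tonelli applied to $(t,h)\mapsto \norm{\psi(t)}_{\rm HS}^\alpha$ and $(t,h)\mapsto \norm{h}^2\wedge 1$.

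First I would reduce $(\star)$ to finite-rank $F$. Writing the singular value decomposition $F=\sum_{j\ge 1}\sigma_j\scapro{\cdot}{e_j}f_j$ with $\sum_j\sigma_j^2<\infty$ and truncating to the first $n$ terms gives $F_n\in L_2(G,H)$ of rank $n$ with $F_n\to F$ in Hilbert--Schmidt norm. Lemma \ref{le.conv_of_radonif_rv} then yields $F_nL(1)\to FL(1)$ in $L_P^0(\Omega,H)$. The laws $\lambda\circ F_n^{-1}$ and $\lambda\circ F^{-1}$ are both purely non-Gaussian infinitely divisible distributions on $H$, so weak convergence together with Lemma \ref{measure_theoretic_lemma} gives $\int(\norm{h}^2\wedge 1)(\lambda\circ F_n^{-1})({\rm d}h)\to\int(\norm{h}^2\wedge 1)(\lambda\circ F^{-1})({\rm d}h)$. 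Since also $\norm{F_n}_{\rm HS}\to\norm{F}_{\rm HS}$, it suffices to prove $(\star)$ for $F$ of finite rank.

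For such $F$, $FL(1)$ lives in ${\rm span}\{f_1,\dots,f_n\}\cong\R^n$ and its coordinates $(\sigma_jL(1)e_j)_{j=1}^n$ form the diagonal image $D_\sigma Y$ of the canonical isotropic $\alpha$-stable vector $Y$ in $\R^n$, where $D_\sigma=\mathrm{diag}(\sigma_1,\dots,\sigma_n)$. Inserting \eqref{spectral_represenation_of_stable_levy} and performing the radial substitution $s=r\norm{D_\sigma x}$ in the inner integral, a direct computation gives
\begin{equation*}
\int_H\big(\norm{h}^2\wedge 1\big)(\lambda\circ F^{-1})({\rm d}h)=\frac{\alpha K_\alpha'}{c_\alpha}\int_{S_{\R^n}}\Big(\sum_{j=1}^n\sigma_j^2 x_j^2\Big)^{\alpha/2}\,\nu_n({\rm d}x),
\end{equation*}
with $K_\alpha':=\int_0^\infty(s^2\wedge 1)s^{-1-\alpha}\,{\rm d}s<\infty$ for $\alpha\in(0,2)$. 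Hence $(\star)$ reduces to the dimension-free sphere inequality
\begin{equation*}
\Big(\sum_j\sigma_j^2\Big)^{\alpha/2}\leq C_\alpha\int_{S_{\R^n}}\Big(\sum_j\sigma_j^2 x_j^2\Big)^{\alpha/2}\,\nu_n({\rm d}x).
\end{equation*}

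To establish this, I would use the probabilistic representation $X\stackrel{d}{=}Y/\norm{Y}$ on the sphere with $Y\sim N(0,I_n)$, together with the independence of $\norm{Y}$ and $Y/\norm{Y}$, which renders the right-hand integral proportional (up to the normalisation of $\nu_n$) to $E[S^{\alpha/2}]/E[\norm{Y}^\alpha]$, where $S:=\sum_j\sigma_j^2Y_j^2$. One computes $E[S]=\sum_j\sigma_j^2$ and $E[S^2]=2\sum_j\sigma_j^4+(\sum_j\sigma_j^2)^2\leq 3(\sum_j\sigma_j^2)^2$ using $\sum_j\sigma_j^4\leq(\sum_j\sigma_j^2)^2$. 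The Paley--Zygmund inequality then delivers $P(S\geq\tfrac12\sum_j\sigma_j^2)\geq 1/12$, from which $E[S^{\alpha/2}]\geq 2^{-\alpha/2}\cdot 12^{-1}\cdot(\sum_j\sigma_j^2)^{\alpha/2}$ follows at once.

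The main obstacle I anticipate is bookkeeping of dimensional factors: the quantity $E[\norm{Y}^\alpha]\asymp n^{\alpha/2}$ arising from the Gaussian representation must be exactly absorbed by the $n$-dependent normalisation of $\nu_n$ in \eqref{spectral_represenation_of_stable_levy} so that the final constant in $(\star)$ depends only on $\alpha$ and not on the rank of $F$. Once $(\star)$ is proved with a dimension-free constant, integrating in $t$ by Tonelli's theorem completes the proof.
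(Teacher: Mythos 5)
Your overall skeleton matches the paper's: prove a pointwise bound for a single Hilbert--Schmidt operator, reduce to finite rank via Lemma \ref{le.conv_of_radonif_rv} (the paper uses the projections $\pi_n$ and Condition (3), you use Lemma \ref{measure_theoretic_lemma}; both work), insert the spectral representation \eqref{spectral_represenation_of_stable_levy} and reduce to a lower bound for $\int_{S_{\mathbb{R}^n}}\big(\sum_j\sigma_j^2x_j^2\big)^{\alpha/2}\nu_n({\rm d}x)$, then integrate in $t$. Your radial substitution giving the factor $K_\alpha'=\int_0^\infty (s^2\wedge1)s^{-1-\alpha}\,{\rm d}s$ is correct, and your Gaussian-representation/Paley--Zygmund treatment of the sphere integral is a legitimate alternative to the paper's argument, which instead applies Jensen's inequality to reduce to the single-coordinate moments $\int_{S_{\mathbb{R}^n}}|x_j|^\alpha\,\nu_n^1({\rm d}x)$.

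However, there is a genuine gap, and it sits exactly at the point you flag but do not resolve: the total mass $\nu_n(S_{\mathbb{R}^n})$. Your argument yields a lower bound proportional to $\nu_n(S_{\mathbb{R}^n})\,E[S^{\alpha/2}]/E[\norm{Y}^\alpha]$ with $E[\norm{Y}^\alpha]\asymp n^{\alpha/2}$, so the conclusion is dimension-free only if $\nu_n(S_{\mathbb{R}^n})$ grows like $n^{\alpha/2}$ with the right $\alpha$-dependent constant. This is not a bookkeeping detail that can be assumed to work out: if $\nu_n$ were a probability measure, as the phrase ``uniform distribution'' in \eqref{spectral_represenation_of_stable_levy} might suggest, the claimed inequality $(\star)$ would be false (take $\sigma_j=n^{-1/2}$ for $j=1,\dots,n$: the left side is $1$ while your lower bound for the right side decays like $n^{-\alpha/2}$). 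The paper closes this point by citing the exact identities from \cite{riedle_2018} (Lemma 2.4 and A2): $\int_{S_{\mathbb{R}^n}}|x_j|^\alpha\,\nu_n^1({\rm d}x)=\tfrac{\Gamma(n/2)\Gamma((1+\alpha)/2)}{\Gamma(1/2)\Gamma((n+\alpha)/2)}$ together with the value of $\nu_n(S_{\mathbb{R}^n})$, whose product is exactly $1$, so all $n$-dependence cancels. Your route can be completed similarly, e.g.\ by observing that projective consistency of the cylindrical L\'evy measure forces $\int_{S_{\mathbb{R}^n}}|x_1|^\alpha\,\nu_n({\rm d}x)$ to be a constant $\kappa_\alpha$ independent of $n$, and the Gaussian identity $\int_{S_{\mathbb{R}^n}}|x_1|^\alpha\,\nu_n^1({\rm d}x)=E[|Y_1|^\alpha]/E[\norm{Y}^\alpha]$ then gives $\nu_n(S_{\mathbb{R}^n})=\kappa_\alpha E[\norm{Y}^\alpha]/E[|Y_1|^\alpha]$, which is precisely the cancellation you need; but as written, your proof asserts rather than establishes this, so the dimension-free constant in $(\star)$ is not yet proved.
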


\begin{proof} Let $\Phi$ be an operator in $L_2(G,H)$.  
The spectral  theorem for compact operators, see e.g.\ \cite[Th.\ 4.1]{diestel_jarchow_tonge_1995},
guarantees that $\Phi$ has a decomposition of the form
\begin{align}\label{eq.spectralrep}
\Phi=\sum_{j=1}^{\infty}\gamma_j\langle a_j,\cdot \rangle b_j,
\end{align}
where $(a_j)_{j \in \mathbb{N}}$ and $(b_j)_{j \in \mathbb{N}}$ are orthonormal bases of $G$ and $H$, respectively,  and $(\gamma_j)_{j \in \mathbb{N}}$ is a sequence in $\mathbb{R}$. Let $\pi_n\colon H \rightarrow H$ be the projection onto $\text{Span}\{b_1,...,b_n\}$. 
We conclude from the spectral representation \eqref{spectral_represenation_of_stable_levy} of the stable measure $\lambda\circ \pi_{a_1,\dots, a_n}^{-1}$  for each $n\in {\mathbb N}$ that
\begin{align*}
    (\lambda \circ \Phi^{-1}\circ \pi^{-1}_n)(\Bar{B}_H^c)
    &=\lambda \Big(\Big\{g \in G:\sum_{j=1}^n\gamma_j^2 \langle a_j, g \rangle^2>1\Big\}\Big)
    \\
    &=\lambda \circ\pi_{a_1,...,a_n}^{-1}\Big(\Big\{x \in \mathbb{R}^n:\sum_{j=1}^n\gamma_j^2x_j^2>1\Big\}\Big)\\
    &=\frac{\alpha}{c_{\alpha}}\int_{S_{\mathbb{R}^n}}\int_0^{\infty}\mathbb{1}_{\{y \in \mathbb{R}^n:\sum_{j=1}^n\gamma_j^2y_j^2>1\}}(rx)\frac{1}{r^{1+\alpha}}\;\d r\; \nu_n(\d x)\\
    &=\frac{1}{c_{\alpha}}\int_{S_{\mathbb{R}^n}} \Big(\sum_{j=1}^n\gamma_j^2x_j^2\Big)^{\alpha/2}\nu_n\,({\rm d} x),
\end{align*}
where $\nu_n$ is a uniform distribution on the sphere $S_{{\mathbb R}^n}$ not necessarily of unit mass. 
By defining $c_n:=\sum_{j=1}^n\gamma_j^2$ and applying Jensen's inequality to the concave function $\beta \mapsto \beta^{\alpha/2}$ with respect to the discrete probability measure $\{c_n^{-1}\gamma_1^2,...,c_n^{-1}\gamma_n^2\}$, we  obtain
\begin{align*}
    \frac{1}{c_{\alpha}}\int_{S_{\mathbb{R}^n}} \Big(\sum_{j=1}^n\gamma_j^2x_j^2\Big)^{\alpha/2} \,\nu_n( {\rm d} x) &\geq \frac{c_n^{\alpha/2}}{c_{\alpha}}\int_{S_{\mathbb{R}^n}}\sum_{j=1}^n\frac{\gamma_j^2}{c_n}\vert x_j \vert^{\alpha}\,\nu_n( {\rm d} x). 
\end{align*}
Letting $\nu_n^1=\frac{1}{\nu_n(S_{\mathbb{R}^n})}\nu_n$, Lemma 2.4 and A2 in \cite{riedle_2018} imply 
\begin{align*}
\frac{c_n^{\alpha/2}}{c_{\alpha}}\int_{S_{\mathbb{R}^n}}\sum_{j=1}^n\frac{\gamma_j^2}{c_n}\vert x_j \vert^{\alpha}\;\nu_n({\rm d} x)
&=    \frac{c_n^{\alpha/2}}{c_{\alpha}} \nu_n(S_{\mathbb{R}^n}) \sum_{j=1}^n\frac{\gamma_j^2}{c_n}\int_{S_{\mathbb{R}^n}}\vert x_j\vert ^{\alpha}\nu_n^1({\rm d} x)\\
&=\frac{c_n^{\alpha/2}}{c_{\alpha}} \nu_n(S_{\mathbb{R}^n}) \frac{\Gamma(\frac{n}{2})\Gamma(\frac{1+\alpha}{2})}{\Gamma(\frac{1}{2})\Gamma(\frac{n+\alpha}{2})}\sum_{j=1}^n\frac{\gamma_j^2}{c_n}\\
&  =\frac{1}{c_{\alpha}} \Big( \sum_{j=1}^n\gamma_j^2\Big)^{\alpha/2}\\
&  = \frac{1}{c_{\alpha}} \Big(\sum_{j=1}^n\norm{\Phi a_j}^2\Big)^{\alpha/2},
\end{align*}
where the last step follows from the spectral representation \eqref{eq.spectralrep}. Thus, for all $n \in \mathbb{N}$ it holds that
\begin{align}\label{eq.estimate-finite-projection}
\frac{1}{c_{\alpha}} \Big(\sum_{j=1}^n\norm{\Phi a_j}^2\Big)^{\alpha/2}\leq(\lambda \circ \Phi^{-1}\circ \pi_n^{-1})(\Bar{B}_H^c).
\end{align}
Since $\pi_n \Phi \rightarrow \Phi$ in $L_2(G,H)$, Lemma \ref{le.conv_of_radonif_rv} implies that $\big((\pi_n\circ\Phi)(L(1))\big)_{n \in \mathbb{N}}$ converges in probability to the random variable $\Phi(L(1))$. Condition \eqref{conv_of_levy_measure} yields
\[\lim_{n \rightarrow \infty}(\lambda \circ \Phi^{-1}\circ \pi^{-1}_n)(\Bar{B}_H^c)=(\lambda \circ \Phi^{-1})(\Bar{B}_H^c).\]
By taking limits as $n \rightarrow \infty$ on both sides in \eqref{eq.estimate-finite-projection}, we obtain
\[\frac{1}{c_{\alpha}} \norm{\Phi}_{\rm HS}^{\alpha}\leq (\lambda \circ \Phi^{-1})(\Bar{B}_H^c).\]
It follows for any measurable function $\psi:[0,T]\rightarrow L_2(G,H)$ that
\begin{align*}
    \int_0^T \norm{\psi(t)}_{\rm HS}^{\alpha}\,{\rm d}t
    &\leq c_{\alpha}\int_0^T\int_{\Bar{B}_H^c}(\lambda \circ \psi(t)^{-1})\,({\rm d}h)\,{\rm d}t \nonumber\\
    &\leq c_{\alpha}\int_0^T\int_H \big(\norm{h}^2\wedge 1\big)(\lambda \circ \psi(t)^{-1})\,({\rm d}h)\,{\rm d}t, 
\end{align*}
which completes the proof.
\end{proof}
The product measure of two cylindrical measures is defined analogously to the case of Radon measures; see \cite[Ch.\ II.2.2]{schwartz_1973}. The following lemma provides an alternative representation of an integral with 
respect to the product measure of the cylindrical L\'evy measure of $L$ and the Lebesgue measure on a finite interval. 
To make sense out of this, the Lebesgue measure is considered as a cylindrical measure on $\Borel({\mathbb{R}})$. 
\begin{lemma}\label{alternative_form_of_modular}
Let $L$ be the canonical $\alpha$-stable cylindrical L\'evy process  in $G$ with cylindrical L\'evy measure $\lambda$. 
Then we have for each $\psi \in \mathcal{S}_{\rm det}^{\rm HS}$ with $\psi(0)=0$ that
\[\int_0^T \int_H \Big( \norm{h}^2 \wedge 1\Big) \big( \lambda \circ \psi(t)^{-1} \big)\, ({\rm d}h)\, {\rm d}t= \int_H \Big( \norm{h}^2 \wedge 1\Big)\Big(\big(\lambda \otimes {\rm Leb} \big)\circ \kappa_\psi^{-1} \Big)\, ({\rm d}h),\]
where $\kappa_\psi\colon  G \times [0,T]\rightarrow H$ is defined by $\kappa_\psi(g,t)=\psi(t)g$.
\end{lemma}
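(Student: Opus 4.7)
The plan is to exploit the step-function structure of $\psi$ to reduce both sides to a finite sum. Since $\psi$ has the form $\psi(t)=\sum_{i=1}^{n-1}F_i\mathbb{1}_{(t_i,t_{i+1}]}(t)$ with each $F_i\in L_2(G,H)$, and the condition $\psi(0)=0$ removes the atom at $t=0$, the pushforward $\lambda\circ\psi(t)^{-1}$ is constant in $t$ on each subinterval $(t_i,t_{i+1}]$ and equals $\lambda\circ F_i^{-1}$. Since each $F_i$ is Hilbert-Schmidt and hence $0$-Radonifying by \cite[Th.\ VI.5.2]{vakhania_1981}, every $\lambda\circ F_i^{-1}$ extends from the cylindrical algebra $\mathcal Z(H)$ to a genuine $\sigma$-finite L\'evy measure on $\Borel(H)$. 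Therefore the left-hand side simplifies, by Fubini on $[0,T]$, to
\begin{equation*}
\int_0^T\!\!\int_H\!\big(\norm{h}^2\wedge 1\big)\big(\lambda\circ\psi(t)^{-1}\big)({\rm d}h)\,{\rm d}t
=\sum_{i=1}^{n-1}(t_{i+1}-t_i)\int_H\!\big(\norm{h}^2\wedge 1\big)\big(\lambda\circ F_i^{-1}\big)({\rm d}h).
\end{equation*}

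For the right-hand side I first identify the pushforward $(\lambda\otimes{\rm Leb})\circ\kappa_\psi^{-1}$ as a genuine Borel measure on $H$. The main computation is on cylindrical sets: for $C=C(h_1,\dots,h_m;B)\in\mathcal Z(H)$ one has
\begin{equation*}
\kappa_\psi^{-1}(C)=\bigsqcup_{i=1}^{n-1}\big\{g\in G:(\langle g,F_i^\ast h_1\rangle,\dots,\langle g,F_i^\ast h_m\rangle)\in B\big\}\times(t_i,t_{i+1}],
\end{equation*}
which shows $\kappa_\psi^{-1}(C)$ is a finite disjoint union of cylindrical rectangles in $G\times[0,T]$. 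Evaluating the cylindrical product measure $\lambda\otimes{\rm Leb}$ on each piece yields
\begin{equation*}
\big((\lambda\otimes{\rm Leb})\circ\kappa_\psi^{-1}\big)(C)=\sum_{i=1}^{n-1}(t_{i+1}-t_i)\big(\lambda\circ F_i^{-1}\big)(C).
\end{equation*}
Both sides of the displayed identity are $\sigma$-additive on $\Borel(H)$ (the right-hand side is a finite positive linear combination of genuine L\'evy measures), so they agree on $\mathcal Z(H)$ and hence on $\Borel(H)$ by uniqueness of the extension. Substituting this identification into the right-hand side of the target identity and interchanging the sum and the integral gives exactly the same finite sum as for the left-hand side, proving equality.

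The step I expect to require the most care is the second one: namely verifying that $(\lambda\otimes{\rm Leb})\circ\kappa_\psi^{-1}$ is indeed well defined as a genuine $\sigma$-additive measure on $\Borel(H)$, rather than only as a cylindrical set function. This hinges on the fact that each $F_i$ is Hilbert-Schmidt and hence Radonifies the cylindrical L\'evy measure $\lambda$, so that the cylindrical pushforward agrees with a Radon measure, and a finite positive combination of such measures remains a Radon measure. Once this is in place, the rest of the argument is a direct bookkeeping computation based on the step-function representation of $\psi$.
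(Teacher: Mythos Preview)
Your proposal is correct and follows essentially the same approach as the paper: identify $(\lambda\otimes{\rm Leb})\circ\kappa_\psi^{-1}$ on cylindrical sets as the finite sum $\sum_i(t_{i+1}-t_i)(\lambda\circ F_i^{-1})$, observe that each summand extends to a genuine L\'evy measure on $\Borel(H)$ because $F_i$ is Hilbert--Schmidt (it is the L\'evy measure of the Radonified increment $F_i(L(t_{i+1})-L(t_i))$), and then compute the integral. The paper organises this by first treating a single rectangle $\Phi\mathbb{1}_{(t_i,t_{i+1}]}$ and then summing, whereas you go directly to the general step function; the content is the same. Two small points of care: the paper works on $\mathcal{Z}_*(H)$ rather than $\mathcal{Z}(H)$ to avoid sets containing the origin (where L\'evy measures may be infinite), and your sentence ``both sides are $\sigma$-additive on $\Borel(H)$ \dots\ hence agree by uniqueness of extension'' is phrased slightly backwards---the left side is only a cylindrical set function until you \emph{define} its extension to be the genuine measure on the right, exactly as your final paragraph explains.
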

\begin{proof}
First, we show that the result holds for $\psi=\Phi\mathbb{1}_{(t_i,t_{i+1}]}$, where $\Phi \in L_2(G,H)$ and $0 \leq t_i<t_{i+1}\leq T$. In this case, we see that for all $C \in \mathcal{Z}_*(H)$
\begin{align} \label{EQNM}
 \big(\lambda \!\otimes\! {\rm Leb} \big)\circ \kappa_\psi^{-1} (C)&=\big(\lambda \otimes {\rm Leb} \big)\big( \Phi^{-1}(C) \times (t_i,t_{i+1}] \big)
  =(t_i-t_{i+1})\big(\lambda \circ \Phi^{-1}\big)(C).
\end{align}
Since the cylindrical measure on the right hand side of Equation (\ref{EQNM}) is the cylindrical Lévy measure of the Radonified increment $\Phi(L(t_{i+1})-L(t_i))$, it extends to a genuine Lévy measure on $\Borel(H)$ for which we keep the notation $\lambda \circ \Phi^{-1}$. Consequently, the cylindrical Lévy measure on the left hand side of Equation (\ref{EQNM}) extends to a genuine Lévy measure on $\Borel(H)$, and the two extensions agree on $\Borel(H)$. It follows that
\begin{align} \label{int_equivalency}
   & \int_H \Big( \norm{h}^2 \wedge 1\Big)\big(\big(\lambda \otimes {\rm Leb} \big)\circ \kappa_\psi^{-1} \big)\, ({\rm d}h)\\
    & =\int_{t_i}^{t_{i+1}} \int_H \Big( \norm{h}^2 \wedge 1\Big)\big(\lambda \circ \Phi^{-1}\big)\, ({\rm d}h) \, {\rm d}t 
    =\int_0^T \int_H \Big( \norm{h}^2 \wedge 1\Big)\big(\lambda \circ \psi(t)^{-1}\big)\, ({\rm d}h) \, {\rm d}t. \nonumber
\end{align}
Let $\psi \in \mathcal{S}_{\rm det}^{\rm HS}$ be of the form as in \eqref{eq.det-step-HS} with $\psi(0)=0$. 
For each $C \in \mathcal{Z}_*(H)$ we obtain
\begin{align*}
    \kappa_\psi^{-1}(C)&= \bigcup_{i=1}^{n-1} \Big\{(g,t)\in G \times [0,T]:\Phi_ig \mathbb{1}_{(t_i,t_{i+1}] }\in C\Big\}.
\end{align*}
Since the above is a finite union of disjoint cylindrical sets, it follows
\begin{align} \label{EQNO}
    \Big((\lambda \otimes {\rm Leb}) \circ \kappa_\psi^{-1}\Big)(C)&=\sum_{i=1}^{n-1}\Big((\lambda \otimes {\rm Leb})\circ \kappa_{\Phi_i \mathbb{1}_{(t_i,t_{i+1}]}}^{-1}\Big)(C).
\end{align}
As the measure on the right side of Equation (\ref{EQNO}) extends to a genuine Lévy measure on $\Borel(H)$ according to the first part of this proof, the measure on the left extends to a genuine Lévy measure on $\Borel(H)$. It follows from Equation (\ref{int_equivalency}) that
\begin{align*}
    \int_H \Big( \norm{h}^2 \wedge 1\Big)&\Big(\big(\lambda \otimes {\rm Leb} \big)\circ \kappa_\psi^{-1} \Big)\, ({\rm d}h)\\
    &= \sum_{i=1}^{n-1}\int_H \Big( \norm{h}^2 \wedge 1\Big)\Big(\big(\lambda \otimes {\rm Leb} \big)\circ \kappa_{\Phi_i \mathbb{1}_{(t_i,t_{i+1}]}}^{-1} \Big)\, ({\rm d}h)\\
    &=\sum_{i=1}^{n-1} \int_0^T\int_H\Big( \norm{h}^2 \wedge 1\Big)\big(\lambda \circ (\Phi_i \mathbb{1}_{(t_i,t_{i+1}]}(t))^{-1}\big)\, ({\rm d}h)\, {\rm d}t\\
    &=\int_0^T \int_H \Big( \norm{h}^2 \wedge 1\Big) \big( \lambda \circ \psi(t)^{-1} \big)\, ({\rm d}h)\, {\rm d}t, 
\end{align*}
which completes the proof.
\end{proof}

\begin{lemma} \label{le.det-small-if-small}
	Let $L$ be the canonical $\alpha$-stable cylindrical L\'evy process  in $G$
and $(\psi_n)_{n\in{\mathbb N}}$ a sequence in $\mathcal{S}_{\rm det}^{\rm HS}$. Then the following are 
equivalent:
\begin{enumerate}
	\item[{\rm (a)}] $\displaystyle \lim_{n\to\infty}\norm{\psi_n}_{L^{\alpha}}=0$;
	\item[{\rm (b)}] $\displaystyle \lim_{n\to\infty} \sup_{\gamma \in \mathcal{S}^{\rm 1,op}_{\rm det}}E\Bigg[\norm{\int_0^T \gamma \psi_n \, dL}\wedge 1 \Bigg]=0$.
\end{enumerate}
\end{lemma}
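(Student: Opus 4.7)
The strategy for both directions passes through the Lévy measure of $Y_n(\gamma):=\int_0^T \gamma\psi_n\, \d L$. Each $Y_n(\gamma)$ is a sum of independent Radonified increments, hence a symmetric infinitely divisible $H$-valued random variable with triplet $(0, 0, \lambda_n^\gamma)$, where the cylindrical measure $(\lambda \otimes {\rm Leb})\circ\kappa_{\gamma\psi_n}^{-1}$ extends to a genuine Lévy measure $\lambda_n^\gamma$ on $\Borel(H)$ by the argument in Lemma \ref{alternative_form_of_modular}. The quantitative bridge $\norm{\psi_n}_{L^\alpha}^\alpha \asymp \int_H (\norm{h}^2 \wedge 1)\lambda_n^\gamma(\d h)$ then reduces both implications to statements about this Lévy measure, uniformly in $\gamma$.

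For (b)$\Rightarrow$(a) I would take $\gamma \equiv I_H \in \mathcal{S}^{\rm 1,op}_{\rm det}$, so that $Y_n(\gamma) = \int_0^T \psi_n\, \d L$. The hypothesis gives $E[\norm{Y_n(\gamma)}\wedge 1] \to 0$, hence $Y_n(\gamma)\to 0$ in probability and therefore weakly to the Dirac measure at the origin with triplet $(0,0,0)$. Lemma \ref{measure_theoretic_lemma} then forces $\int_H(\norm{h}^2\wedge 1)\lambda_n^{I_H}(\d h)\to 0$, and combining with Lemmas \ref{L_alpha_smaller_than_modular} and \ref{alternative_form_of_modular} gives $\norm{\psi_n}_{L^\alpha}^\alpha \leq c_\alpha \int_H(\norm{h}^2\wedge 1)\lambda_n^{I_H}(\d h) \to 0$.

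For (a)$\Rightarrow$(b) the key step is a converse of Lemma \ref{L_alpha_smaller_than_modular}: there is $\tilde c_\alpha > 0$ such that
\[\int_H (\norm{h}^2\wedge 1)(\lambda\circ\Phi^{-1})(\d h) \leq \tilde c_\alpha \norm{\Phi}_{\rm HS}^\alpha \qquad\text{for every }\Phi\in L_2(G,H).\]
This should follow from the spectral-representation computation of Lemma \ref{L_alpha_smaller_than_modular}, by bounding the integral $\int_{S_{\mathbb R^n}}\bigl(\sum_{j=1}^n \gamma_j^2 x_j^2\bigr)^{\alpha/2}\nu_n(\d x)$ from above by a constant multiple of $\bigl(\sum_{j=1}^n \gamma_j^2\bigr)^{\alpha/2}$ uniformly in $n$ via a Gaussian representation of the sphere measure combined with Jensen's inequality applied to the concave map $\beta \mapsto \beta^{\alpha/2}$, and then passing to the limit along $\pi_n\Phi \to \Phi$ using Lemmas \ref{le.conv_of_radonif_rv} and \ref{measure_theoretic_lemma}. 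Granted this, the contractivity $\norm{\gamma(t)\psi_n(t)}_{\rm HS}\leq \norm{\psi_n(t)}_{\rm HS}$ for $\gamma\in \mathcal{S}^{\rm 1,op}_{\rm det}$ together with Lemma \ref{alternative_form_of_modular} yields
\[\int_H (\norm{h}^2\wedge 1)\lambda_n^\gamma(\d h) \leq \tilde c_\alpha \norm{\psi_n}_{L^\alpha}^\alpha =: \epsilon_n\]
uniformly in $\gamma$, after which the Lévy-Itô decomposition $Y_n(\gamma) = Z^{(1)}_n + Z^{(2)}_n$ into independent parts from jumps in $B_H$ and $B_H^c$ (with the symmetry of $\lambda_n^\gamma$ removing any need to compensate $Z^{(1)}_n$) gives $E\bigl[\norm{Z^{(1)}_n}^2\bigr] \leq \epsilon_n$ and $P(Z^{(2)}_n \neq 0) \leq \lambda_n^\gamma(B_H^c) \leq \epsilon_n$, so that $E[\norm{Y_n(\gamma)}\wedge 1] \leq \epsilon_n^{1/2} + \epsilon_n$ uniformly in $\gamma$. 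The main obstacle I anticipate is the converse inequality: reversing the Jensen step of Lemma \ref{L_alpha_smaller_than_modular} requires an $n$-independent bound on the spherical integral, for which the normalising constants of $\nu_n$ must be tracked carefully before the limit is taken; the rest reduces to standard estimates for infinitely divisible random variables.
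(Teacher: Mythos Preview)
Your (b)$\Rightarrow$(a) argument is essentially the paper's: choose $\gamma={\rm Id}$, so that $I(\psi_n)\to 0$ in probability and hence weakly to $\delta_0$; Lemma~\ref{measure_theoretic_lemma} forces $\int_H(\norm{h}^2\wedge 1)\,\lambda_n^{\rm Id}(\d h)\to 0$, and Lemmas~\ref{L_alpha_smaller_than_modular} and~\ref{alternative_form_of_modular} convert this to $\norm{\psi_n}_{L^\alpha}\to 0$.

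For (a)$\Rightarrow$(b) you and the paper diverge. The paper takes a one-line shortcut by citing the moment estimate $E\big[\norm{I(\psi)}^p\big]\le c_{\alpha,p}\norm{\psi}_{L^\alpha}^p$ for $p<\alpha$ from \cite[Cor.\ 3]{kosmala2021stochastic}; Markov's inequality and the contractivity $\norm{\gamma\psi_n}_{\rm HS}\le\norm{\psi_n}_{\rm HS}$ then give the uniform bound immediately. Your route---prove the converse of Lemma~\ref{L_alpha_smaller_than_modular} and feed it into a L\'evy--It\^o splitting---is longer but self-contained and correct. The obstacle you anticipate does resolve as you suggest: writing $x=G/\norm{G}$ with $G\sim N(0,I_n)$ and applying Jensen gives $\int_{S_{\R^n}}\bigl(\sum_j\gamma_j^2 x_j^2\bigr)^{\alpha/2}\nu_n^1(\d x)\le n^{-\alpha/2}\bigl(\sum_j\gamma_j^2\bigr)^{\alpha/2}$, while the identity used in the proof of Lemma~\ref{L_alpha_smaller_than_modular} shows $\nu_n(S_{\R^n})=\Gamma(\tfrac12)\Gamma(\tfrac{n+\alpha}{2})\big/\bigl(\Gamma(\tfrac n2)\Gamma(\tfrac{1+\alpha}{2})\bigr)\sim C\,n^{\alpha/2}$, so the product is uniformly bounded in $n$. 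One point you did not flag: the spectral computation only controls the tail mass $(\lambda\circ\Phi^{-1})(\Bar{B}_H^c)$, not the full $\int_H(\norm{h}^2\wedge 1)(\lambda\circ\Phi^{-1})(\d h)$; however the two are proportional with constant $2/(2-\alpha)$ because $\lambda\circ\Phi^{-1}$ is the L\'evy measure of a symmetric $\alpha$-stable law on $H$ and therefore has the polar form $\sigma(\d\xi)\,r^{-1-\alpha}\d r$.
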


\begin{proof}
 Let $(\psi_n)_{n \in \mathbb{N}}$ be a sequence of elements of $\mathcal{S}_{\rm det}^{\rm HS}$ with $\lim_{n \rightarrow \infty}\norm{\psi_n}_{L^\alpha}=0$.
 Corollary 3 in \cite{kosmala2021stochastic} and Markov's inequality show for all $\epsilon>0$ and $p<\alpha$ that
\begin{align*}
    \sup_{\gamma \in \mathcal{S}^{\rm 1,op}_{\rm det}}P\Bigg(\norm{\int_0^T\gamma \psi_n \; \d L}>\epsilon\Bigg) &\leq \frac{1}{\epsilon^p}\sup_{\gamma \in \mathcal{S}^{\rm 1,op}_{\rm det}}E\Bigg[\norm{\int_0^T\gamma \psi_n\; \d L}^p\Bigg]\\
    &\leq \sup_{\gamma \in \mathcal{S}^{\rm 1,op}_{\rm det}} \frac{1}{\epsilon^p} c_{\alpha,p}\Bigg(\int_0^T \norm{\gamma \psi_n}_{\rm HS}^{\alpha}\, {\rm d}t\Bigg)^{p/\alpha}
    \le \frac{1}{\epsilon^p}c_{\alpha,p}\norm{\psi_n}_{L^{\alpha}}^{p},
\end{align*}
where the last inequality follows from $\norm{\gamma\psi_n}_{\rm HS}\le \norm{\psi_n}_{\rm HS}$ for $\gamma\in \mathcal{S}^{\rm 1,op}_{\rm det}$. This proves (b) because of the equivalent characterisation of the topology in $L^0_P(\Omega,H)$.

To prove the converse implication, let $(\psi_n)_{n \in \mathbb{N}}$ be a sequence of elements of $\mathcal{S}_{\rm det}^{\rm HS}$ satisfying Condition (b).  Since for each $n \in \mathbb{N}$, $\psi_n$ has a representation of the form
\[\psi_n(t)=\Phi_0^n\mathbb{1}_{\{0\}}(t)+\sum_{i=1}^{N(n)-1} \Phi_i^n \mathbb{1}_{(t_i^n,t_{i+1}^n]}(t),\]
\noindent where $0= t_1^n<...<t_{N(n)}^n = T$, and $\Phi_i^n \in L_2(G,H)$ for each $i \in \{0,...,N(n)-1\}$, the integral $I(\psi_n)$ satisfies
\[I(\psi_n)=\sum_{i=1}^{N(n)-1}\Phi_i^n \big(L(t_{i+1}^n)-L(t_i^n)\big). \] 
The cylindrical L\'evy measure of $\Phi_i^n (L(t_{i+1}^n)-L(t_i^n))$ is given by 
$ (t_{i+1}^n-t_i^n) \big(\lambda \circ (\Phi_i^n)^{-1}\big)$ which extends to a genuine L\'evy measure on 
$\Borel(H)$ for which we keep the same notation. Independent increments of $L$ together with Equation \eqref{EQNO} show that the infinitely divisible random variable $I(\psi_n)$ has characteristics
\[\Bigg(0,0,\sum_{i=1}^{N(n)-1} (t_{i+1}^n-t_i^n) \big(\lambda \circ (\Phi_i^n)^{-1}\big)\Bigg)=\Bigg(0,0,(\lambda \otimes {\rm Leb}) \circ \kappa_{\psi_n}^{-1}\Bigg).\]
Since Condition (b) implies that $\lim_{n \rightarrow \infty}I(\psi_n)=0$ in $L_P^0(\Omega,H)$ and $\big(\lambda \otimes {\rm Leb} \big)\circ \kappa_{\psi_n}^{-1}$ is the L\'evy measure of $I(\psi_n)$, we conclude from Lemmata \ref{L_alpha_smaller_than_modular}, \ref{alternative_form_of_modular} and  \ref{measure_theoretic_lemma} that
\begin{align*}
    \lim_{n \rightarrow \infty}\int_0^T \norm{\psi_n(t)}_{\rm HS}^{\alpha}\, {\rm d}t&\leq \lim_{n \rightarrow \infty} c_{\alpha}\int_0^T\int_H \big(\norm{h}^2\wedge 1\big)(\lambda \circ \psi_n(t)^{-1})\, ({\rm d}h)\, {\rm d}t\\
    &=\lim_{n \rightarrow \infty}c_{\alpha}\int_H \Big( \norm{h}^2 \wedge 1\Big)\Big(\big(\lambda \otimes {\rm Leb} \big)\circ \kappa_{\psi_n}^{-1} \Big)\, ({\rm d}h)=0, 
\end{align*}
which completes the proof.
\end{proof}

\begin{proof}[Proof of Theorem \ref{det_if_and_only_if_integrable}]
 If $\psi \in \mathcal{I}_{\rm det}^{\rm HS}$ then there exists a sequence $(\psi_n)_{n \in \mathbb{N}} \subseteqq\mathcal{S}_{\rm det}^{\rm HS}$ such that $\psi_n \rightarrow \psi$ Lebesgue a.e.\ and  $\sup_{\gamma \in \mathcal{S}_{\rm det}^{1,op}}E[\norm{I(\gamma(\psi_m-\psi_n))}\wedge 1] \rightarrow 0$ as $m, n \to \infty$. This implies $\norm{\psi_m-\psi_n}_{L^{\alpha}} \rightarrow 0$  by Lemma \ref{le.det-small-if-small}. Completeness of $L^\alpha$ and the fact that $\psi_n \rightarrow \psi$ Lebesgue a.e.\ allows us to conclude $\psi \in L^{\alpha}$. Conversely, if  $\psi \in L^{\alpha}$ then there exists a sequence $(\psi_n)_{n \in \mathbb{N}}$ of elements in $\mathcal{S}_{\rm det}^{\rm HS}$ such that $\psi_n \rightarrow \psi$ Lebesgue a.e.\ and $\norm{\psi_n-\psi}_{L^{\alpha}}\rightarrow 0$ by an extension of  \cite[Re.\ 1.2.20]{hytonen_neerven_2016} to all $\alpha \in (0,1)$. It follows that $\norm{\psi_m-\psi_n}_{L^{\alpha}}\rightarrow 0$, which implies  $\sup_{\gamma \in \mathcal{S}_{\rm det}^{\rm 1,op}}E[\norm{I(\gamma(\psi_m-\psi_n))}\wedge 1] \rightarrow 0$ by Lemma \ref{le.det-small-if-small} and establishes  $\psi \in \mathcal{I}_{\rm det}^{\rm HS}$. 
\end{proof}

\section{Predictable Integrands}


As in the case of deterministic integrands, we begin by introducing two classes of functions on which our definition of the stochastic integral depend.
\begin{definition} \hfill 
	\begin{enumerate}
		\item[{\rm (1)}] An $L_2(G,H)$-valued adapted step process $\Psi \colon \Omega \times [0,T]\rightarrow L_2(G,H)$ is of the form 
		\begin{align} \label{eq.step-HS}
		  \Psi(\omega, t)=\left(\sum_{k=1}^{N(0)}F_{0,k}\mathbb{1}_{A_{0,k}}(\omega)\right)\mathbb{1}_{\{0\}}(t)+\sum_{i=1}^{n-1} \left(\sum_{k=1}^{N(i)}F_{i,k}\mathbb{1}_{A_{i,k}}(\omega)\right) \mathbb{1}_{ (t_i,t_{i+1}]}(t),
		\end{align}
		where $0=t_1<\cdots < t_{n}=T$, $A_{0,k} \in \mathcal{F}_0$ and $F_{0,k} \in L_2(G,H)$ for all $k=1,...,N(0)$, $A_{i,k} \in \mathcal{F}_{t_i}$ and $F_{i,k} \in L_2(G,H)$ for all $i=1,...,n-1$ and $k=1,...,N(i)$. The space of all $L_2(G,H)$-valued adapted step processes is denoted by $\mathcal{S}_{\rm adp}^{\rm HS}:=\mathcal{S}_{\rm adp}^{\rm HS}(G,H)$.
		\item[{\rm (2)}] An $L(H,H)$-valued adapted step process $\Gamma \colon \Omega \times [0,T]\rightarrow L(H,H)$ is of the form
		\begin{align}
		  \Gamma (\omega, t)=\left(\sum_{k=1}^{N(0)}F_{0,k}\mathbb{1}_{A_{0,k}}(\omega)\right)\mathbb{1}_{\{0\}}(t)+\sum_{i=1}^{n-1} \left(\sum_{k=1}^{N(i)}F_{i,k}\mathbb{1}_{A_{i,k}}(\omega)\right) \mathbb{1}_{ (t_i,t_{i+1}]}(t),
		\end{align}
			where $0=t_1<\cdots < t_{n}=T$, $A_{0,k} \in \mathcal{F}_0$ and $F_{0,k}\in L(H,H)$ for all $k=1,...,N(0)$, $A_{i,k} \in \mathcal{F}_{t_i}$ and $F_{i,k} \in L(H,H)$ for all $i=1,...,n-1$ and $k=1,...,N(i)$. The space of all $L(H,H)$-valued adapted step processes with
			\begin{align*}
			    \sup_{(\omega,t)\in \Omega \times [0,T]}\norm{\Gamma(\omega,t)}_{H \rightarrow H}\leq 1
			\end{align*}
			is denoted by  $\mathcal{S}_{{\rm adp}}^{1, {\rm op}}:=\mathcal{S}_{{\rm adp}}^{1, {\rm op}}(H,H)$.
	\end{enumerate}
\end{definition}

Let $\Psi \in \mathcal{S}_{\rm adp}^{\rm HS}$ be of the form (\ref{eq.step-HS}). Since Hilbert-Schmidt operators are $0$-Radonifying by \cite[Th.\ VI.5.2]{vakhania_1981}, it follows from  \cite[Pr.\ VI.5.3]{vakhania_1981} that there exists an $H$-valued random variable $\Phi_{i,k}(L(t_{i+1})-L(t_i))\colon \Omega\to H$ for each $i=1,...,n-1$ and $k=1,...,N(i)$, satisfying \[ \big(L(t_{i+1})-L(t_i)\big)(\Phi_{i,k}^*h)=\langle \Phi_{i,k}(L(t_{i+1})-L(t_i)),h \rangle \quad \text{$P$-a.s.\ for all}\; h \in H.\]
In this case,  the stochastic integral of $\Psi$ is defined by
\[I(\Psi):=\int_0^T \Psi(t) \, {\rm d}L(t) :=\sum_{i=1}^{n-1}\sum_{k=1}^{N(i)} \mathbb{1}_{A_{i,k}}\Phi_{i,k} (L(t_{i+1})-L(t_i) ).\]
Thus, the integral $I(\Psi):\Omega\rightarrow H$ is a genuine $H$-valued random variable.

For the purposes of this section, it is convenient to introduce the measure space $\big(\Omega \times [0,T],\mathcal{P}, P_T\big)$, where $\mathcal{P}$ denotes the predictable $\sigma$-algebra and the measure $P_T$ is defined by $P_T:=P \otimes {\rm Leb} \vert_{[0,T]}$.

\begin{definition} \label{pred_integrability}
We say that a predictable process $\Psi$ is integrable if there exists a sequence $(\Psi_n)_{n \in \mathbb{N}}$ of processes in $\mathcal{S}_{\rm adp}^{\rm HS}$ such that
\begin{enumerate}[(1)]
    \item[{\rm (1)}]  $(\Psi_n)_{n \in \mathbb{N}}$ converges $P_T$-a.e. to $\Psi$,
    \item[{\rm (2)}] $\displaystyle \lim_{m,n \rightarrow \infty}\sup_{\Gamma \in \mathcal{S}_{{\rm adp}}^{1, {\rm op}}}E\Bigg[\norm{\int_0^T \Gamma(\Psi_m-\Psi_n) \;{\rm d}L}\wedge1 \Bigg]=0.$
\end{enumerate}
In this case,  the stochastic integral of $\Psi$ is defined by
\[I(\Psi):=\int_0^T \Psi \;{\rm d}L= \lim_{n\rightarrow \infty} \int_0^T \Psi_n \;{\rm d}L \quad \text{in}\;L_P^0(\Omega,H).\]
The class of all integrable $L_2(G,H)$-valued predictable processes is denoted by $\mathcal{I}_{\rm prd}^{\rm HS}:=\mathcal{I}_{\rm prd}^{\rm HS}(G,H)$. 
\end{definition}
The space $L_P^0\big(\Omega,L_{{\rm Leb}}^{\alpha}\big([0,T],L_2(G,H)\big)\big)$ for $\alpha\in (0,2)$ denotes the set of all random variables $\Psi:\Omega \rightarrow L_{{\rm Leb}}^{\alpha}\big([0,T],L_2(G,H)\big)$ endowed with the translation invariant metric $\normm{\cdot}_{L^{\alpha}}$ defined by
\[\normm{\Psi_1-\Psi_2}_{L^{\alpha}}=E\Big[\norm{\Psi_1-\Psi_2}_{L^{\alpha}}\wedge 1\Big].\]
The space $L_{{\rm Leb}}^{\alpha}\big([0,T],L_2(G,H)\big)$ is separable, see \cite[Pr.\ 1.2.29]{hytonen_neerven_2016} for the proof when $\alpha \geq 1$, which can be generalised for all $\alpha>0$. If $(\Psi(t))_{t \geq 0}$ is an $L_2(G,H)$-valued predictable stochastic process that almost surely has paths in $L_{{\rm Leb}}^{\alpha}\big([0,T],L_2(G,H)\big)$, then \cite[Pr.\ 3.19]{da_prato_zabczyk_1992} guarantees that the mapping $\Psi: \Omega \rightarrow L_{{\rm Leb}}^{\alpha}\big([0,T],L_2(G,H)\big)$ defines a random variable. Therefore, it is reasonable to write $\Psi \in L_P^0\big(\Omega,L_{{\rm Leb}}^{\alpha}\big([0,T],L_2(G,H)\big)\big)$ to denote this setting.

\begin{lemma} \label{pred_density}
Let $\Psi$ be a predictable stochastic process in $L_P^0\big(\Omega,L_{{\rm Leb}}^{\alpha}\big([0,T],L_2(G,H)\big)\big)$. Then there exists a sequence $(\Psi_k)_{k \in \mathbb{N}}$ of elements of $\mathcal{S}_{\rm adp}^{\rm HS}$
converging to $\Psi$ both in $\normm{\cdot}_{L^{\alpha}}$ and $P_T$-a.e. 
\end{lemma}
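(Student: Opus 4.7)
The plan is to use the standard three-step reduction for predictable processes taking values in a Bochner space: truncation in Hilbert--Schmidt norm, left-averaging mollification, and spatio-temporal discretisation, closed out with a diagonal argument and extraction of an a.e.-convergent subsequence.

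I would first truncate: set $\Psi^{(N)}(\omega,t):=\Psi(\omega,t)\mathbb{1}_{\{\norm{\Psi(\omega,t)}_{\rm HS}\leq N\}}$, which remains predictable. Since $\norm{\Psi(\omega,\cdot)}_{\rm HS}^{\alpha}\in L^1([0,T])$ for a.e.\ $\omega$, pathwise dominated convergence gives $\norm{\Psi^{(N)}-\Psi}_{L^\alpha}\to 0$ almost surely and hence $\normm{\Psi^{(N)}-\Psi}_{L^\alpha}\to 0$. For bounded $\Psi^{(N)}$ I would apply a left mollification
\[
\Psi^{(N,\epsilon)}(\omega,t):=\frac{1}{\epsilon}\int_{(t-\epsilon)\vee 0}^{t}\Psi^{(N)}(\omega,s)\,\d s,
\]
the Bochner integral being well-defined thanks to boundedness. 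This process is adapted and has left-continuous paths, hence is predictable; the Lebesgue differentiation theorem in the separable Banach space $L_2(G,H)$ together with bounded convergence yields $P_T$-a.e.\ convergence and pathwise convergence in $L^\alpha$ as $\epsilon\downarrow 0$.

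The third step discretises in time and in space. On the mesh $t_i^n=iT/n$ set
\[
\Psi^{(N,\epsilon,n)}(\omega,t):=\sum_{i=1}^{n-1}\Psi^{(N,\epsilon)}(\omega,t_i^n)\mathbb{1}_{(t_i^n,t_{i+1}^n]}(t),
\]
which converges to $\Psi^{(N,\epsilon)}$ pointwise by left-continuity, hence in pathwise $L^\alpha$ by bounded convergence. The random variable $\Psi^{(N,\epsilon)}(\cdot,t_i^n)$ is $\mathcal{F}_{t_i^n}$-measurable with values in the separable space $L_2(G,H)$, so it may be approximated almost surely in $\norm{\cdot}_{\rm HS}$ by finite-valued $\mathcal{F}_{t_i^n}$-measurable random variables of the form $\sum_{k=1}^{N(i)} F_{i,k}\mathbb{1}_{A_{i,k}}$ with $A_{i,k}\in\mathcal{F}_{t_i^n}$ and $F_{i,k}\in L_2(G,H)$. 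The resulting processes lie in $\mathcal{S}_{\rm adp}^{\rm HS}$, and boundedness lets us push the spatial approximation error into a pathwise $L^\alpha$-estimate uniformly in $n$.

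A diagonal argument in the four parameters, carried out with respect to the metric $\normm{\cdot}_{L^\alpha}$, then delivers a sequence $(\Psi_k)\subset\mathcal{S}_{\rm adp}^{\rm HS}$ with $\normm{\Psi_k-\Psi}_{L^\alpha}\to 0$. This is equivalent to $\norm{\Psi_k-\Psi}_{L^\alpha}\to 0$ in probability, so along a subsequence it holds almost surely; within the truncation level the integrand $\norm{\Psi_k-\Psi}_{\rm HS}^{\alpha}$ is $P_T$-integrable, whence a further subsequence converges $P_T$-a.e.\ by Fubini. The main obstacle I anticipate is compatibility with the filtration across all three reductions: truncation must precede mollification so that the Bochner integral is well-defined even when $\alpha<1$ (in which case the raw paths need not be locally Bochner integrable), and the spatial discretisation must be executed in an $\mathcal{F}_{t_i^n}$-measurable way. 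A secondary technicality is that for $\alpha<1$ the functional $\norm{\cdot}_{L^\alpha}$ is only a quasi-metric, so the error control in the diagonal argument has to be phrased in the $\alpha$-power, where subadditivity replaces the triangle inequality.
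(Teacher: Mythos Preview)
Your construction is sound in outline but takes a genuinely different and longer route than the paper. The paper bypasses mollification and time-discretisation entirely: for bounded $\Psi$ it observes that $\Psi\in L_{P_T}^{\infty}\big(\Omega\times[0,T],L_2(G,H)\big)$ and invokes an abstract density lemma (Hyt\"onen--van Neerven--Veraar--Weis, Lemma~1.2.19 and Remark~1.2.20) on the generating algebra $\mathcal{A}'=\{(s,t]\times B:B\in\mathcal{F}_s\}\cup\{\{0\}\times B:B\in\mathcal{F}_0\}$ of $\mathcal{P}$, which immediately produces a uniformly bounded sequence in $\mathcal{S}_{\rm adp}^{\rm HS}$ converging $P_T$-a.e.; two applications of dominated convergence (first in $t$ for each $\omega$, then in $\omega$) then give $\normm{\cdot}_{L^\alpha}$-convergence. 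The unbounded case is handled by truncation and a single-parameter diagonal. Your four-layer approximation is more explicit and avoids the quoted density result, at the cost of extra bookkeeping; the paper's argument is shorter but less self-contained.

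Your final step, however, does not work as written. After the diagonal you obtain $\normm{\Psi_k-\Psi}_{L^\alpha}\to 0$ and pass to a subsequence with $\norm{\Psi_k-\Psi}_{L^\alpha}\to 0$ almost surely; you then claim that ``within the truncation level'' the integrand $\norm{\Psi_k-\Psi}_{\rm HS}^\alpha$ is $P_T$-integrable, giving $P_T$-a.e.\ convergence along a further subsequence. But $\Psi$ itself is not truncated, so $E\int_0^T\norm{\Psi_k-\Psi}_{\rm HS}^\alpha\,\d t$ need not be finite, and pathwise $L^\alpha$-convergence only yields, for each $\omega$ separately, an $\omega$-dependent subsequence converging for a.e.\ $t$. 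The fix is simpler than your proposed route: $\normm{\Psi_k-\Psi}_{L^\alpha}\to 0$ already implies convergence in $P_T$-measure, since for any $\epsilon,\delta>0$ one splits on the event $\{\int_0^T\norm{\Psi_k-\Psi}_{\rm HS}^\alpha\,\d t\le\delta\}$ and uses Chebyshev to get
\[
P_T\big(\norm{\Psi_k-\Psi}_{\rm HS}>\epsilon\big)\le \frac{\delta}{\epsilon^\alpha}+T\,P\Big(\int_0^T\norm{\Psi_k-\Psi}_{\rm HS}^\alpha\,\d t>\delta\Big),
\]
which can be made arbitrarily small; then a subsequence converges $P_T$-a.e.
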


\begin{proof}
If $\Psi$ is bounded, then $\Psi \in L_{P_T}^{\infty}\big(\Omega \times [0,T],L_2(G,H)\big)$. Since the algebra of sets
\[\mathcal{A'}=\big\{(s,t]\times B:s<t,B \in \mathcal{F}_s\big\} \cup \big\{\{0\}\times B:B \in \mathcal{F}_0\big\}\]
generates $\mathcal{P}$, we conclude from \cite[Le.\ 1.2.19]{hytonen_neerven_2016} and \cite[Re.\ 1.2.20]{hytonen_neerven_2016} that there exists a sequence $(\Psi_k)_{k \in \mathbb{N}}$ of uniformly bounded processes in $\mathcal{S}_{\rm adp}^{\rm HS}$ such that $\Psi_k \rightarrow \Psi$ $P_T$-a.e. Thus, there exists a set $N \in \mathcal{P}$ such that $P_T(N)=0$ and $\big(\Psi_k(\omega,t)-\Psi(\omega,t)\big)\rightarrow 0$ for all $(\omega,t) \in N^c$. Fubini's theorem implies that
\[P_T(N)=P \otimes {\rm Leb}\vert_{[0,T]}(N)=\int_{\Omega}{\rm Leb}\vert_{[0,T]}(N_{\omega})\,P({\rm d}\omega)=0,\]
where for each fixed $\omega \in \Omega$ we define
\[N_\omega:=\Big\{t \in [0,T]\colon  \big(\Psi_k(\omega,t)-\Psi(\omega,t)\big)_{m \in \mathbb{N}} \; \text{does not converge to 0}\Big\}.\]
 The above implies that ${\rm Leb}\vert_{[0,T]}(N_\omega)=0$ for almost all $\omega \in \Omega$, that is, there exists an $\Omega_0\subseteq \Omega$ with $P(\Omega_0)=1$ such that for all $\omega \in \Omega_0$ we have 
 \[{\rm Leb}\vert_{[0,T]}\Big(t \in [0,T]\colon  \big(\Psi_k(\omega,t)-\Psi(\omega,t)\big)_{m \in \mathbb{N}} \; \text{does not converge to 0}\Big)=0.\]
Because  $(\Psi_k)_{k \in \mathbb{N}}$ is uniformly bounded and $\Psi$ is bounded, we can conclude from   Lebesgue's dominated convergence theorem  that $\norm{\Psi_k(\omega,\cdot)-\Psi(\omega,\cdot)}_{L^{\alpha}}\rightarrow 0$ as $k \rightarrow \infty$ for each $\omega \in \Omega_0$.  Another application of Lebesgue's dominated convergence theorem yields
\begin{align*}
    \lim_{k \rightarrow \infty}\normm{\Psi-\Psi_k}_{L^{\alpha}}= \lim_{k \rightarrow \infty} \int_{\Omega}\Big(\norm{\Psi-\Psi_k}_{L^{\alpha}}\wedge 1\Big)\,{\rm d}P=0,
\end{align*}
which shows the claim  if $\Psi$ is bounded. In the case of a general $\Psi$, we define
\[\Psi_n:\Omega\times [0,T] \rightarrow L_2(G,H),\qquad
    \Psi_n(\omega,t)= 
\begin{cases}
    \Psi(\omega,t) & \text{if } \norm{\Psi(\omega,t)}_{\rm HS}\leq n , \\
    0              & \text{otherwise.}
\end{cases}
\]
Clearly, $\lim_{n \rightarrow \infty}\normm{\Psi-\Psi_n}_{L^{\alpha}}=0$. The first part of the proof shows that 
for each $n \in \mathbb{N}$ there exists a sequence $(\Psi_{n,k})_{k \in \mathbb{N}} \subseteq \mathcal{S}_{\rm adp}^{\rm HS}$ converging to $\Psi_n$ as $k\to\infty$  in $\normm{\cdot}_{L^{\alpha}}$ and $P_T$-a.e. For each $n \in \mathbb{N}$  choose $k_n\in {\mathbb N}$ such that $\normm{(\Psi_n-\Psi_{n,k_n})}_{L^{\alpha}} < \frac{1}{n}$.
It follows that 
\[\lim_{n \rightarrow \infty}\normm{(\Psi-\Psi_{n,k_n})}_{L^{\alpha}}\leq \lim_{n \rightarrow \infty}\bigg(\normm{(\Psi-\Psi_n)}_{L^{\alpha}}+\normm{(\Psi_n-\Psi_{n,k_n})}_{L^{\alpha}}\bigg)=0,\]
which completes the proof. 
\end{proof}

\section{Construction of the decoupled tangent sequence}

The technique of constructing decoupled tangent sequences is a powerful tool to obtain strong results on a sequence of possibly dependent random variables. In this section, we briefly recall the fundamental definition, see e.g.\ Kwapie{\'n} and Woyczy{\'n}ski \cite{kwapien_woyczynski_1992} or  de la Pe\~{n}a and Gin\'{e} \cite{Pena_1999}, and construct the decoupled tangent sequence in our setting which will enable us to identify the largest space of predictable integrands in the next section. 

\begin{remark} \label{product_expectation_remark}
We repeatedly use the fact in the following that given a random variable $X$ on $(\Omega,\mathcal{F},P)$ and another probability space $(\Omega',\mathcal{F'},P')$, the random variable $X$ can always be considered as a random variable on the product space $(\Omega \times \Omega',\mathcal{F} \otimes \mathcal{F}',P \otimes P')$ by defining
\[X(\omega,\omega')=X(\omega)\quad \text{for all}\;(\omega,\omega')\in \Omega \times \Omega'.\]
In this case, if $X$ is real-valued and $P$-integrable we have  $E_P[X]=E_{P \otimes P'}[X]$.
\end{remark}

\noindent In the next definition, we follow closely Chapter 4.3 of \cite{kwapien_woyczynski_1992}.

\begin{definition}\label{dec_tang_seq_def}
Let $\big(\Omega, \mathcal{F},P,(\mathcal{F}_n)_{n\in{\mathbb N}}\big)$ be a filtered probability space and $(X_n)_{n \in \mathbb{N}}$ an $(\mathcal{F}_n)$-adapted sequence of $H$-valued random variables. If $\big(\Omega', \mathcal{F'},P',(\mathcal{F'}_n)_{n\in{\mathbb N}}\big)$ is another filtered probability space, then a sequence $(Y_n)_{n \in \mathbb{N}}$ of $H$-valued random variables defined on $\big(\Omega \times \Omega^\prime, \mathcal{F}\otimes \mathcal{F'}, P \otimes P', (\mathcal{F}_n\otimes \mathcal{F'}_n)_{n\in{\mathbb N}}\big)$ is said to be a decoupled tangent sequence to $(X_n)_{n \in \mathbb{N}}$ if
\begin{enumerate}[(1)]
    \item[{\rm (1)}]  for each $\omega \in \Omega$, we have that $(Y_n(\omega,\cdot))_{n \in \mathbb{N}}$ is a sequence of independent random variables on $(\Omega',\mathcal{F}',P')$;
    \item[{\rm (2)}]  the sequences $(X_n)_{n \in \mathbb{N}}$ and $(Y_n)_{n \in \mathbb{N}}$ satisfy for each $n\in {\mathbb N}$ that
\[\mathcal{L}(X_n \vert \mathcal{F}_{n-1}\otimes \mathcal{F'}_{n-1})=\mathcal{L}(Y_n \vert \mathcal{F}_{n-1}\otimes \mathcal{F'}_{n-1}) \quad P \otimes P'-\text{a.s.}\]
\end{enumerate}
\end{definition}

The main tool for establishing the stochastic integral in the next section is a cylindrical L\'evy process $\widetilde{L}$ on an enlarged probability space, whose Radonified increments are decoupled to the Radonified increments of the original canonical $\alpha$-stable cylindrical L\'evy process. This cylindrical L\'evy process $\widetilde{L}$ is explicitly constructed in the following result.

\begin{proposition} \label{dec_tan_seq}
Let $L$ be a cylindrical L\'evy process on $G$, $0 = t_0 \leq ... \leq t_N=T$ be a partition of $[0,T]$	
and for each $n=1,...,N$ we define $\Theta_n:= \sum_{k=1}^{M(n)} \Phi_{n,k} \mathbb{1}_{A_{n,k}}$, where $\Phi_{n,k} \in L_2(G,H)$,  $A_{n,k} \in \mathcal{F}_{t_{n-1}}$ for all $k=1,...,M(n)$.
By defining cylindrical random variables 
\[ \widetilde{L}(t)\colon G \rightarrow L^0_{P \otimes P}(\Omega \times \Omega; \mathbb{R}), \qquad 
\Big(\widetilde{L}(t)g\Big)(\omega,\omega')=\Big(L(t)g\Big)(\omega'),\]
it follows that $(\widetilde{L}(t):t\geq 0)$  is a cylindrical L\'evy process on $G$  and the sequence of its Radonified increments
\[\left(\Theta_n \big(\widetilde{L}(t_n)-\widetilde{L}(t_{n-1})\big)\right)_{n \in \{1,...,N\} }\]
defined on $\big(\Omega \times \Omega,\mathcal{F} \otimes \mathcal{F},P \otimes P,(\mathcal{F}_{t_n} \otimes \mathcal{F}_{t_n})_{n \in \{0,...,N\}}\big)$ is a decoupled tangent sequence to the sequence of Radonified increments
\[\Big(\Theta_n \big(L(t_n)-L(t_{n-1})\big)\Big)_{n \in \{1,...,N\}}\]
defined on $\big(\Omega,\mathcal{F},P,(\mathcal{F}_{t_n})_{n \in \{0,...,N\}}\big).$
\end{proposition}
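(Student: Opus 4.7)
The proposition has two assertions, which I would address in order: that $\widetilde L$ is a cylindrical L\'evy process on $G$, and that its Radonified increments form a decoupled tangent sequence to the original ones.

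First, for cylindricality of $\widetilde L(t)\colon G\to L^0_{P\otimes P}(\Omega\times\Omega,\R)$, linearity is immediate from linearity of $L(t)$, and continuity follows since $L(t)g_m\to L(t)g$ in $P$-probability on $\Omega$ automatically transfers to convergence in $P\otimes P$-probability of the lift $\widetilde L(t)g_m\to\widetilde L(t)g$. For the L\'evy process property, fix $g_1,\dots,g_d\in G$ and note that the coordinate projection $(\omega,\omega')\mapsto\omega'$ is measure preserving; hence the finite-dimensional law of $((\widetilde L(t)g_1,\dots,\widetilde L(t)g_d))_{t\ge 0}$ under $P\otimes P$ coincides with that of $((L(t)g_1,\dots,L(t)g_d))_{t\ge 0}$ under $P$, which is a L\'evy process in $\R^d$.

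Next I would give an explicit formula for the Radonified increments of $\widetilde L$. For any $F\in L_2(G,H)$, the random variable $(\omega,\omega')\mapsto F(L(t_n)-L(t_{n-1}))(\omega')$ satisfies the defining identity of the Radonification with respect to $\widetilde L$, so by a.s.\ uniqueness of the inducing random variable one obtains
\[F\big(\widetilde L(t_n)-\widetilde L(t_{n-1})\big)(\omega,\omega')=F\big(L(t_n)-L(t_{n-1})\big)(\omega').\]
Setting $Y_n:=\Theta_n(\widetilde L(t_n)-\widetilde L(t_{n-1}))$, this gives $Y_n(\omega,\omega')=\Theta_n(\omega)(L(t_n)-L(t_{n-1}))(\omega')$. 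Condition~(1) of Definition~5.1 is then immediate: for fixed $\omega$, $\Theta_n(\omega)\in L_2(G,H)$ is a deterministic operator, so $Y_n(\omega,\cdot)$ is a Borel-measurable function of the increment $L(t_n)-L(t_{n-1})$ alone on $(\Omega',P')$, and independence of the sequence $(Y_n(\omega,\cdot))_{n}$ follows from the independent-increments property of $L$ on $(\Omega',P')$.

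For Condition~(2) I would verify the equality of conditional laws by showing $E[f(X_n)\mid\mathcal F_{t_{n-1}}\otimes\mathcal F'_{t_{n-1}}]=E[f(Y_n)\mid\mathcal F_{t_{n-1}}\otimes\mathcal F'_{t_{n-1}}]$ $P\otimes P$-a.s.\ for every bounded continuous $f\colon H\to\R$, where $X_n:=\Theta_n(L(t_n)-L(t_{n-1}))$. The $X$-side is a function of $\omega$ alone, so the $\mathcal F'_{t_{n-1}}$ factor drops out; combining $\mathcal F_{t_{n-1}}$-measurability of $\Theta_n$ with independence of $L(t_n)-L(t_{n-1})$ from $\mathcal F_{t_{n-1}}$ yields the $\mathcal F_{t_{n-1}}$-measurable map $\omega\mapsto E_P[f(\Theta_n(\omega)(L(t_n)-L(t_{n-1})))]$. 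On the $Y$-side, $\Theta_n$ is $\mathcal F_{t_{n-1}}\otimes\{\emptyset,\Omega\}$-measurable, and the lifted increment $(\omega,\omega')\mapsto(L(t_n)-L(t_{n-1}))(\omega')$ is independent under $P\otimes P$ of $\mathcal F_{t_{n-1}}\otimes\mathcal F'_{t_{n-1}}$; Fubini then produces the same conditional expression. The main technical obstacle lies precisely in this last independence claim: one combines the trivial independence of the lifted increment from $\mathcal F_{t_{n-1}}$ (the two are measurable with respect to different factors of the product space) with its independence from $\mathcal F'_{t_{n-1}}$ (the defining L\'evy property of $L$ on $(\Omega',P')$) to conclude independence from the full product $\sigma$-algebra. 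Once this bookkeeping is in place, the identity of conditional distributions is a routine Fubini computation.
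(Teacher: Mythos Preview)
Your proposal is correct and follows essentially the same route as the paper's proof: verify that $\widetilde L$ is a cylindrical L\'evy process via the measure-preserving projection onto the second factor, obtain Condition~(1) by freezing $\omega$ so that the sequence $Y_n(\omega,\cdot)$ inherits independence from the increments of $L$, and establish Condition~(2) by computing both conditional laws explicitly using that $\Theta_n$ is $\mathcal F_{t_{n-1}}$-measurable and the relevant increments are independent of $\mathcal F_{t_{n-1}}\otimes\mathcal F'_{t_{n-1}}$.

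The only difference worth noting is in the test functions used for Condition~(2): the paper works with conditional characteristic functions $h\mapsto E[e^{i\langle\,\cdot\,,h\rangle}\mid\mathcal F_{t_{n-1}}\otimes\mathcal F'_{t_{n-1}}]$ and, in the course of the computation, plugs in the explicit $\alpha$-stable form $e^{-(t_n-t_{n-1})\|\Theta_n^\ast h\|^\alpha}$, whereas you test against arbitrary bounded continuous $f$. Your variant has the small advantage of matching the stated generality of the proposition (which is formulated for an arbitrary cylindrical L\'evy process, not only the canonical $\alpha$-stable one); the paper's explicit formula is not actually needed for the argument, since the key point in both versions is that the two conditional expectations reduce to the same function $\omega\mapsto E_P[f(\Theta_n(\omega)(L(t_n)-L(t_{n-1})))]$.
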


\begin{proof}
In order to make it easier to follow this proof, we define $\Omega'=\Omega$, $\mathcal{F'}=\mathcal{F}$, $P'=P$ and $\mathcal{F'}_{t_n}=\mathcal{F}_{t_n}$ for all $n \in \{0,...,N\}$ and instead of denoting the filtered product space by
\[\Big(\Omega \times \Omega,\mathcal{F} \otimes \mathcal{F},P \otimes P,(\mathcal{F}_{t_n} \otimes \mathcal{F}_{t_n})_{n \in \{0,...,N\}}\Big),\]
we write
\[\Big(\Omega \times \Omega',\mathcal{F} \otimes \mathcal{F'},P \otimes P',(\mathcal{F}_{t_n} \otimes \mathcal{F'}_{t_n})_{n \in \{0,...,N\}}\Big).\]
The fact that for each $t\geq 0$ the mapping $\widetilde{L}(t)\colon G \rightarrow L_{P \otimes P'}^0(\Omega \times \Omega',\mathbb{R})$ is continuous follows directly from the definition of $\widetilde{L}$ and Remark \ref{product_expectation_remark}. Thus $\widetilde{L}$ is a cylindrical stochastic process. To prove that it is in fact a cylindrical L\'evy process, let us fix $n \in \mathbb{N}$ and $g_1,\ldots ,g_n \in G$ and consider the $n$-dimensional processes
$Y$ and $Z$ defined by $Y(t)=(\widetilde{L}(t)g_1,\ldots,\widetilde{L}(t)g_n)$ and $Z(t)=(L(t)g_1,\ldots, L(t)g_n)$.
It is enough to show that for any $m \in \mathbb{N}$ and times $0\leq t_0 <\cdots <t_m\leq T$ the random variables $Y(t_m)-Y(t_{m-1}), \ldots, Y(t_1)-Y(t_0)$ and $Z(t_m)-Z(t_{m-1}), \dots ,Z(t_1)-Z(t_0)$
have the same distribution. Here we only prove that for any $0\leq s<t \leq T$ the random variables $Y(t)-Y(s)$ and $Z(t)-Z(s)$ have the same distribution. The general case follows analogously. To see this, let $A \in \Borel(\mathbb{R}^n)$ be arbitrary. The very definition of $\widetilde{L}$ shows 
\begin{align*}
    &(P \otimes P')\left(Y(t)-Y(s)\in A\right)\\
    &\qquad =(P \otimes P')\left((\widetilde{L}(t)g_1-\widetilde{L}(s)g_1,...,\widetilde{L}(t)g_n-\widetilde{L}(s)g_n)\in A\right)\\
    &\qquad =(P \otimes P')\left(\Omega \times \left\{(L(t)g_1-L(s)g_1,...,L(t)g_n-L(s)g_n)\in A\right\}\right)\\
    &\qquad=P'\left( (L(t)g_1-L(s)g_1,...,L(t)g_n-L(s)g_n)\in A \right)\\
    &\qquad =P\left(Z(t)-Z(s)\in A\right).
\end{align*}
To show that the Radonified increments of $\tilde{L}$ satisfy Condition (1) of Definition \ref{dec_tang_seq_def},  fix some $\omega \in \Omega$. Then $\Theta_n(\omega)$ is a (deterministic) Hilbert-Schmidt operator and $(\widetilde{L}(t)(\omega,\cdot):t \geq 0)$ is a cylindrical Lévy process in $G$. Thus, for a fixed $\omega \in \Omega$ and $n\in \{1,...,N\}$, the Radonified increment $\Theta_n(\omega) (\widetilde{L}(t_n)(\omega,\cdot)-\widetilde{L}(t_{n-1})(\omega,\cdot))$ is an $\mathcal{F}'_{t_n}$-measurable $H$-valued random variable on $(\Omega',\mathcal{F}',P')$ independent of $\mathcal{F}'_{t_{n-1}}$. It follows for each $\omega \in \Omega$ that
\[\left(\Theta_n(\omega) (\widetilde{L}(t_n)(\omega,\cdot)-\widetilde{L}(t_{n-1})(\omega,\cdot))\right)_{n \in \{1,...,N\}}\]
is a sequence of independent random variables. 

For establishing Condition (2) of Definition \ref{dec_tang_seq_def}, we define for each $n \in \{1,...,N\}$
the $H$-valued random variables 
\begin{align*}
&X_n:=\Theta_n\big(L(t_n)-L(t_{n-1})\big):= \sum_{k=1}^{M(n)} \mathbb{1}_{A_{n,k}} \Phi_{n,k} \big(L(t_n)-L(t_{n-1})\big),\\
&Y_n:=\Theta_n\big(\widetilde{L}(t_n)-\widetilde{L}(t_{n-1})\big)
:=\sum_{k=1}^{M(n)} \mathbb{1}_{A_{n,k}} \Phi_{n,k} \big(\widetilde{L}(t_n)-\widetilde{L}(t_{n-1})\big),
\end{align*}
where $F_{n,k} \big(L(t_n)-L(t_{n-1})\big)$ and $ F_{n,k} \big(\widetilde{L}(t_n)-\widetilde{L}(t_{n-1})\big)$ refer to the Radonified increments. Choose regular versions of the conditional distributions
\begin{align*}
    &(P \otimes P')_{X_n}\colon \Borel(H) \times (\Omega \times \Omega') \rightarrow [0,1],\\
    &\qquad\qquad (P \otimes P')_{X_n}\big(B,(\omega,\omega')\big)=(P \otimes P')(X_n \in B \vert \mathcal{F}_{t_{n-1}} \otimes \mathcal{F'}_{t_{n-1}})(\omega,\omega'),\\
    & (P \otimes P')_{Y_n}\colon \Borel(H) \times (\Omega \times \Omega') \rightarrow [0,1], \\ 
     &\qquad\qquad  (P \otimes P')_{Y_n}\big(B,(\omega,\omega')\big)=(P \otimes P')(Y_n \in B \vert \mathcal{F}_{t_{n-1}} \otimes \mathcal{F'}_{t_{n-1}})(\omega,\omega').
 \end{align*}
Since  $\widetilde{L}(t)$ is a cylindrical L\'evy process, we obtain for all $h \in H$ and $n\in{\mathbb N}$ that
\begin{align} \label{EQNU}
     E_{P \otimes P'}&\Big[e^{i\langle Y_n, h\rangle}\Big\vert \mathcal{F}_{t_{n-1}} \otimes \mathcal{F'}_{t_{n-1}}\Big]\nonumber\\
     &=E_{P \otimes P'}\left[e^{i \left \langle \left(\sum_{k=1}^{M(n)}\mathbb{1}_{A_{n,k} \times \Omega'}\Phi_{n,k}\right)(\widetilde{L}(t_n)-\widetilde{L}(t_{n-1})), h \right\rangle}\Big\vert \mathcal{F}_{t_{n-1}} \otimes \mathcal{F'}_{t_{n-1}}\right]\nonumber\\
     &=\sum_{k=1}^{M(n)} E_{P \otimes P'}\Big[\mathbb{1}_{{A_{n,k}} \times \Omega'}\,e^{{i\langle \Phi_{n,k}(\widetilde{L}(t_n)-\widetilde{L}(t_{n-1}))}, h\rangle}\Big\vert \mathcal{F}_{t_{n-1}} \otimes \mathcal{F'}_{t_{n-1}}\Big]\nonumber\\
     &=\sum_{k=1}^{M(n)} \mathbb{1}_{{A_{n,k}} \times \Omega'}\, E_{P \otimes P'}\Big[e^{{i\langle \Phi_{n,k}(\widetilde{L}(t_n)-\widetilde{L}(t_{n-1}))}, h\rangle}\Big\vert \mathcal{F}_{t_{n-1}} \otimes \mathcal{F'}_{t_{n-1}}\Big]\nonumber\\
     &=\sum_{k=1}^{M(n)}\mathbb{1}_{{A_{n,k}} \times \Omega'}\, E_{P \otimes P'}\Big[e^{{i\langle \Phi_{n,k}(\widetilde{L}(t_n)-\widetilde{L}(t_{n-1}))}, h\rangle}\Big]\nonumber\\
     &=\sum_{k=1}^{M(n)}\mathbb{1}_{{A_{n,k}} \times \Omega'}\, E_{P'}\Big[e^{{i\langle \Phi_{n,k}({L}(t_n)-{L}(t_{n-1}))}, h\rangle}\Big]\nonumber\\
     &=\sum_{k=1}^{M(n)}\mathbb{1}_{{A_{n,k}}\times \Omega'}\,e^{{-(t_n-t_{n-1}) \norm{\Phi_{n,k}^*h}^{\alpha}}}\nonumber\\
    &= e^{{-(t_n-t_{n-1}) \norm{\Theta_n^* h}^{\alpha}}}.
\end{align}
In the same way we obtain 
\begin{align} \label{EQNT}
	E_{P \otimes P'}&\Big[e^{i\langle X_n, h\rangle}\Big\vert \mathcal{F}_{t_{n-1}} \otimes \mathcal{F'}_{t_{n-1}}\Big]
	= e^{{-(t_n-t_{n-1}) \norm{\Theta_n^* h}^{\alpha}}}.
\end{align}
It follows from  \eqref{EQNU} and \eqref{EQNT} by calculating the conditional expectation from the conditional probability, see e.g. \cite[Th.\ 6.4]{olav_kallenberg_2002},  that for $P \otimes P'$ a.a.\ $(\omega,\omega') \in \Omega \times \Omega'$ and for all $u\in H$ we have
\begin{align*}
    \varphi_{(P \otimes P')_{X_n}(\cdot,(\omega,\omega'))}(u)&=\int_H e^{i\langle h,u\rangle}\, (P \otimes P')_{X_n}\big({\rm d}h,(\omega,\omega')\big)\\
    &=E_{P \otimes P'}\Big[e^{i\langle X_n, u\rangle}\Big\vert \mathcal{F}_{t_{n-1}} \otimes \mathcal{F'}_{t_{n-1}}\Big](\omega,\omega')\\
    &=E_{P \otimes P'}\Big[e^{i\langle Y_n, u\rangle}\Big\vert \mathcal{F}_{t_{n-1}} \otimes \mathcal{F'}_{t_{n-1}}\Big](\omega,\omega')\\
    &=\int_H e^{i\langle h,u\rangle}\,(P \otimes P')_{X_n}\big({\rm d}h,(\omega,\omega')\big)
    =\varphi_{(P \otimes P')_{Y_n}(\cdot,(\omega,\omega'))}(u).
\end{align*}
Since characteristic functions uniquely determine distributions on $\Borel(H)$, we obtain
\[(P \otimes P')_{X_n}(\cdot,(\omega,\omega'))=(P \otimes P')_{Y_n}(\cdot,(\omega,\omega')) \quad P \otimes P'-\text{a.s.}, \]
establishing Condition (2) of Definition \ref{dec_tang_seq_def}. 
\end{proof}

\section{Characterisation of random integrable processes}

The following is the main result of our work characterising the largest space of predictable integrands which are stochastically integrable with respect to a canonical $\alpha$-stable cylindrical L\'evy process. 
\begin{theorem} \label{pred_iff_integrable}
The space $\mathcal{I}_{\rm prd}^{\rm HS}$ of  predicable Hilbert-Schmidt operator-valued processes integrable 
with respect to a canonical $\alpha$-stable cylindrical L\'evy process  in $G$ for $\alpha\in (0,2)$ 
 coincides with predictable processes in $L_P^0\big(\Omega,L_{{\rm Leb}}^{\alpha}\big([0,T],L_2(G,H)\big)\big)$. 	
\end{theorem}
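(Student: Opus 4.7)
The plan is to run the deterministic argument of Theorem~\ref{det_if_and_only_if_integrable} path-wise: Lemma~\ref{pred_density} will handle the approximation step, while the decoupled tangent sequence constructed in Proposition~\ref{dec_tan_seq} will transport the identification through conditioning on the first probability coordinate. I establish the two inclusions separately.

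For the inclusion $L_P^0(\Omega,L_{\rm Leb}^{\alpha}([0,T],L_2(G,H))) \subseteq \mathcal{I}_{\rm prd}^{\rm HS}$, given a predictable $\Psi$ with paths a.s.\ in $L^\alpha$, Lemma~\ref{pred_density} supplies $(\Psi_n) \subset \mathcal{S}_{\rm adp}^{\rm HS}$ with $\Psi_n \to \Psi$ both $P_T$-a.e.\ and in $\normm{\cdot}_{L^\alpha}$. To verify the Cauchy condition in Definition~\ref{pred_integrability}(2), I would condition path-wise on $\omega$: for each fixed $\omega$ the cut-offs $\Gamma(\omega,\cdot)$ and $(\Psi_m-\Psi_n)(\omega,\cdot)$ lie in $\mathcal{S}_{\rm det}^{1,\rm op}$ and $\mathcal{S}_{\rm det}^{\rm HS}$ respectively, and the quantitative estimate used in the forward direction of Lemma~\ref{le.det-small-if-small} (via Corollary~3 of \cite{kosmala2021stochastic}) yields, for any $p \in (0,\alpha)$ and $\epsilon > 0$,
\[\sup_{\Gamma \in \mathcal{S}_{\rm adp}^{1,\rm op}} P\bigl(\norm{I(\Gamma(\Psi_m-\Psi_n))} > \epsilon\bigr) \leq c_{\alpha,p}\,\epsilon^{-p}\, E\bigl[\norm{\Psi_m-\Psi_n}_{L^\alpha}^p \wedge 1\bigr],\]
where $\norm{\Gamma(\omega,t)F}_{\rm HS} \le \norm{F}_{\rm HS}$ absorbs the $\Gamma$-dependence. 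Since $\normm{\Psi_m-\Psi_n}_{L^\alpha}\to 0$, the required convergence holds and hence $\Psi \in \mathcal{I}_{\rm prd}^{\rm HS}$.

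For the reverse inclusion, suppose $\Psi \in \mathcal{I}_{\rm prd}^{\rm HS}$ with approximating sequence $(\Psi_n)\subset \mathcal{S}_{\rm adp}^{\rm HS}$. I lift to the enlarged probability space of Proposition~\ref{dec_tan_seq} and form the decoupled integrals $\widetilde{I}(\Psi_n)$ built from the tangent Radonified increments of $\widetilde{L}$. The classical decoupling inequality for Hilbert-space-valued tangent sequences (Kwapie\'n--Woyczy\'nski, Ch.~4 of \cite{kwapien_woyczynski_1992}) transfers the Cauchy condition from $I(\Psi_m-\Psi_n)$ to $\widetilde{I}(\Psi_m-\Psi_n)$, giving $\widetilde{I}(\Psi_m-\Psi_n)\to 0$ in $P \otimes P'$-probability. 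The crucial point is that, by construction of $\widetilde{L}$ and of the decoupled sum, for $P$-a.e.\ $\omega \in \Omega$ the random variable $\widetilde{I}(\Psi_n(\omega,\cdot))(\omega,\cdot)$ coincides with the deterministic stochastic integral $\int_0^T \Psi_n(\omega,t)\,\d L(t)$ on $(\Omega',\mathcal{F}',P')$. Fubini then reduces the decoupled Cauchy condition to a Cauchy statement for deterministic integrals of $\Psi_n(\omega,\cdot)-\Psi_m(\omega,\cdot)$ for a.e.\ $\omega$, which by Lemma~\ref{le.det-small-if-small} is equivalent to $\norm{\Psi_m(\omega,\cdot)-\Psi_n(\omega,\cdot)}_{L^\alpha}\to 0$ in $P$-probability. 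Completeness of $L^\alpha$ together with the $P_T$-a.e.\ convergence $\Psi_n \to \Psi$ then forces $\Psi$ to have paths in $L^\alpha$ for $P$-a.e.\ $\omega$ and $\Psi_n \to \Psi$ in $\normm{\cdot}_{L^\alpha}$, so $\Psi \in L_P^0(\Omega,L_{\rm Leb}^{\alpha}([0,T],L_2(G,H)))$.

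The step I expect to be the main obstacle is the compatibility between the supremum over $\Gamma \in \mathcal{S}_{\rm adp}^{1,\rm op}$ appearing in Definition~\ref{pred_integrability} and the invocation of the decoupling inequality. Decoupling in its usual form applies to a fixed sum of tangent variables, so its uniform application over $\Gamma$ must be realised path-wise: after conditioning on $\omega$, $\Gamma(\omega,\cdot)$ becomes a deterministic element of $\mathcal{S}_{\rm det}^{1,\rm op}$, and the random supremum absorbs into the deterministic supremum over $\gamma \in \mathcal{S}_{\rm det}^{1,\rm op}$ featuring in Lemma~\ref{le.det-small-if-small}. The modular-to-probability bounds coming from Corollary~3 of \cite{kosmala2021stochastic} are the technical ingredient that makes this interchange rigorous, and I anticipate needing to be careful with measurability when slicing $\Gamma$ and $\Psi_n$ in $\omega$ before applying the deterministic estimates.
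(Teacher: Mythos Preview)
The principal gap is in your forward inclusion $L_P^0(\Omega,L^\alpha) \subseteq \mathcal{I}_{\rm prd}^{\rm HS}$. Your derivation of the displayed bound on $P\bigl(\norm{I(\Gamma(\Psi_m - \Psi_n))} > \epsilon\bigr)$ by ``conditioning path-wise on $\omega$'' does not work: $I$ here denotes the stochastic integral with respect to $L$ on the \emph{same} probability space $(\Omega, P)$ that carries the integrand. Fixing $\omega$ determines $\Gamma(\omega,\cdot)$, $(\Psi_m - \Psi_n)(\omega,\cdot)$ \emph{and} the path of $L$ simultaneously; there is no residual randomness over which to take a probability, so the deterministic moment bound from Corollary~3 of \cite{kosmala2021stochastic} cannot be invoked in this way. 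That bound controls $E\bigl[\norm{\int \gamma \psi\, \d L}^p\bigr]$ for a deterministic $\psi$, and extending it to random adapted integrands is precisely the non-trivial step.

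The paper handles this with the very tool you reserve for the reverse inclusion. One first passes to the decoupled process $\widetilde{L}$ on the product space, where freezing the first coordinate $\omega$ renders the integrand deterministic while leaving the genuine L\'evy process $\widetilde{L}(\omega,\cdot)$ on $(\Omega', P')$ intact; the deterministic estimate of Lemma~\ref{le.det-small-if-small} then gives the bound for $\int \Gamma \Psi_n\, \d\widetilde{L}$, and the decoupling inequality (Proposition~5.7.1(ii) of \cite{kwapien_woyczynski_1992}) transfers it back to $\int \Gamma \Psi_n\, \d L$, with a universal constant that makes the estimate uniform in $\Gamma$. Thus decoupling is essential in \emph{both} directions of Lemma~\ref{le.small-if-small}, not only in the reverse one. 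Your reverse direction is in outline the paper's argument, but two points deserve care: you must extract an a.s.-convergent subsequence before applying Fubini, and the transfer from $L$ to $\widetilde{L}$ there uses Proposition~5.7.2 of \cite{kwapien_woyczynski_1992}, whose sign-factors $\epsilon_i \in \{\pm 1\}$ are exactly what get absorbed into the supremum over $\Gamma \in \mathcal{S}_{\rm adp}^{1,\rm op}$.
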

As in the case of deterministic integrands, the above characterisation of the space of integrable predictable processes 
strongly relies on the equivalent notion of convergences in the two spaces. 
\begin{lemma} \label{le.small-if-small}
	Let $L$ be the canonical $\alpha$-stable cylindrical L\'evy process  in $G$
	and $(\Psi_n)_{n\in{\mathbb N}}$ a sequence in $\mathcal{S}_{\rm adp}^{\rm HS}$. Then the following are 
	equivalent:
	\begin{enumerate}
		\item[{\rm (a)}] $\displaystyle \lim_{n\to\infty}\normm{\Psi_n}_{L^{\alpha}}=0$;
		\item[{\rm (b)}] $\displaystyle \lim_{n\to\infty} \displaystyle\sup_{\Gamma \in \mathcal{S}_{{\rm adp}}^{1, {\rm op}}}E\Bigg[\norm{\int_0^T \Gamma \Psi_n \;{\rm d}L}\wedge 1 \Bigg]=0$.
	\end{enumerate}
\end{lemma}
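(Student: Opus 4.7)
My plan is to reduce both implications to the already-established deterministic Lemma \ref{le.det-small-if-small} via a fibre-wise/Fubini argument, using the decoupled cylindrical L\'evy process $\widetilde L$ from Proposition \ref{dec_tan_seq}. For $\Gamma\in \mathcal{S}_{\rm adp}^{1,\rm op}$, let $\widetilde I(\Gamma\Psi_n)$ denote the same simple sum defining $I(\Gamma\Psi_n)$, but with the Radonified increments of $\widetilde L$ replacing those of $L$. The key analytical tool is the two-sided tangent sequence decoupling inequality for Hilbert space-valued sums (see \cite{kwapien_woyczynski_1992}; cf.\ its use in \cite{jakubowski_riedle_2017}), providing constants $K,\kappa>0$ such that
\[
P\big(\norm{I(\Gamma\Psi_n)}>\epsilon\big)\le K\,(P\otimes P)\big(\norm{\widetilde I(\Gamma\Psi_n)}>\kappa\epsilon\big)
\]
and the symmetric inequality with $I$ and $\widetilde I$ interchanged, valid for every $\epsilon>0$.

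For (a) $\Rightarrow$ (b), the gain from passing to $\widetilde I$ is that for each fixed $\omega\in \Omega$, the map $t\mapsto \Gamma(\omega,t)\Psi_n(\omega,t)$ is a deterministic step function in $\mathcal{S}_{\rm det}^{\rm HS}$, while $\widetilde L(\cdot,\omega,\cdot)$ remains the original canonical $\alpha$-stable cylindrical L\'evy process on $(\Omega',\mathcal{F}',P')$. Applying the deterministic moment estimate \cite[Cor.\ 3]{kosmala2021stochastic} fibre-wise for some $p\in (0,\alpha)$, together with the bound $\norm{\Gamma\Psi_n}_{\rm HS}\le \norm{\Psi_n}_{\rm HS}$, yields
\[
E_{P'}\big[\norm{\widetilde I(\Gamma\Psi_n)(\omega,\cdot)}^p\big]\le c_{\alpha,p}\norm{\Psi_n(\omega,\cdot)}_{L^\alpha}^p.
\]
Markov's inequality, truncation by $1$ and integration over $\omega$ give
\[
(P\otimes P)\big(\norm{\widetilde I(\Gamma\Psi_n)}>\kappa\epsilon\big)\le E_P\Big[1\wedge \tfrac{c_{\alpha,p}}{(\kappa\epsilon)^p}\norm{\Psi_n}_{L^\alpha}^p\Big],
\]
whose right-hand side tends to $0$ uniformly in $\Gamma$ by dominated convergence, since (a) is precisely the statement that $\norm{\Psi_n}_{L^\alpha}\to 0$ in $P$-probability. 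The decoupling inequality then transfers this convergence to $I(\Gamma\Psi_n)$, yielding (b).

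For (b) $\Rightarrow$ (a), choosing $\Gamma\equiv \mathrm{id}_H$ in (b) gives $I(\Psi_n)\to 0$ in $L^0_P(\Omega,H)$, and the reverse decoupling inequality upgrades this to $\widetilde I(\Psi_n)\to 0$ in $L^0_{P\otimes P}(\Omega\times \Omega',H)$. Fubini's theorem then supplies a subsequence $(n_k)$ with $\widetilde I(\Psi_{n_k})(\omega,\cdot)\to 0$ in $L^0_{P'}$ for $P$-almost every $\omega$. For each such $\omega$, the infinitely divisible triplet of $\widetilde I(\Psi_{n_k})(\omega,\cdot)$ is $(0,0,(\lambda\otimes \mathrm{Leb})\circ \kappa_{\Psi_{n_k}(\omega,\cdot)}^{-1})$, exactly as in the proof of Lemma \ref{le.det-small-if-small}, so the same chain of Lemmata \ref{L_alpha_smaller_than_modular}, \ref{alternative_form_of_modular} and \ref{measure_theoretic_lemma} yields $\norm{\Psi_{n_k}(\omega,\cdot)}_{L^\alpha}\to 0$. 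Dominated convergence in $\omega$ (with dominating constant $1$) followed by a standard subsequence-of-a-subsequence argument promotes this to $\normm{\Psi_n}_{L^\alpha}\to 0$, establishing (a).

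The main obstacle is correctly invoking the two-sided tangent sequence decoupling inequality in Hilbert space to link convergence in $L^0_P$ of the adapted integral with convergence in $L^0_{P\otimes P}$ of the decoupled integral; once this is granted, everything else is a clean Fubini reduction to the deterministic Lemma \ref{le.det-small-if-small}, which is why essentially all of the measure-theoretic work done for the deterministic case in Section 3 can be recycled verbatim at the fibre level.
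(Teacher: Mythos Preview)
Your approach is essentially the paper's: reduce to Lemma \ref{le.det-small-if-small} fibre-wise via the decoupled process $\widetilde L$ of Proposition \ref{dec_tan_seq} and transfer back using the tangent-sequence inequalities of \cite{kwapien_woyczynski_1992}. The direction (a)$\Rightarrow$(b) is fine and in fact slightly slicker than the paper's version (you apply the $p$-th moment bound and dominated convergence directly rather than passing through an $\epsilon$--$\delta$ reformulation of Lemma \ref{le.det-small-if-small}).

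There is, however, a genuine gap in (b)$\Rightarrow$(a). The reverse decoupling inequality you invoke is not literally ``symmetric'': Proposition 5.7.2 in \cite{kwapien_woyczynski_1992} only yields
\[
E_{P\otimes P'}\Big[\norm{\widetilde I(\Psi_n)}\wedge 1\Big]\le c\,\max_{\epsilon_i\in\{\pm 1\}}E_P\bigg[\Big\|\sum_i \epsilon_i\,\Phi_i^n\big(L(t_{i+1}^n)-L(t_i^n)\big)\Big\|\wedge 1\bigg],
\]
with deterministic signs $\epsilon_i$ on the right. Conditional symmetry of the Radonified increments does \emph{not} allow you to drop these signs: for adapted coefficients $\Phi_i^n$ the distributions of $\sum_i \epsilon_i X_i$ and $\sum_i X_i$ can genuinely differ, so knowing only $I(\Psi_n)\to 0$ from the single choice $\Gamma=\mathrm{Id}$ is insufficient. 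The paper's proof uses the full strength of hypothesis (b) precisely here: since each diagonal sign operator $t\mapsto \epsilon_i\,\mathrm{Id}\,\1_{(t_i^n,t_{i+1}^n]}(t)$ belongs to $\mathcal{S}_{\rm adp}^{1,\rm op}$, the right-hand side above is dominated by $\sup_{\Gamma\in\mathcal{S}_{\rm adp}^{1,\rm op}}E_P\big[\norm{I(\Gamma\Psi_n)}\wedge 1\big]$, which tends to zero by assumption. With this one-line correction your argument goes through and coincides with the paper's.
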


\begin{proof}
To prove (a) $\Rightarrow$ (b), let $\epsilon>0$ be fixed. Lemma \ref{le.det-small-if-small} enables us to choose 
$\delta>0$ such that for every $\psi\in\mathcal{S}_{\rm det}^{\rm HS}$ we have 
the implication:
\begin{align}\label{eq.det-small-if-small}
	\norm{\psi}_{L^{\alpha}}\le \delta \;\Rightarrow\; 
 \sup_{\gamma \in \mathcal{S}^{1,{\rm op}}_{\rm det}}E\Bigg[\norm{\int_0^T \gamma \psi  \;{\rm d}L}\wedge 1 \Bigg]\le 
 \epsilon. 	
\end{align}
Since $\lim_{n \rightarrow \infty}\normm{\Psi_n}_{L^{\alpha}}=0$, there exists $n_0\in {\mathbb N}$ such that the set $A_n:=\{\norm{\Psi_n}_{L^\alpha}\le \delta\}$ satisfies $P(A_n)\geq 1-\epsilon$ for all $n\ge n_0$. Implication 
\eqref{eq.det-small-if-small} implies for all $\omega\in A_n$ and  $n\geq n_0$ that 
\[
\sup_{\Gamma \in \mathcal{S}_{{\rm adp}}^{1, {\rm op}}}
P'\left(\omega' \in \Omega'\colon \norm{\left(\int_0^T \Gamma(\omega)\Psi_n(\omega) \;{\rm d}\widetilde{L}(\omega,\cdot)\right)(\omega^\prime)}> \epsilon \right)\leq\epsilon.\]
Since $P(A_n)\geq 1- \epsilon $ for all $n \geq n_0$, we obtain 
\begin{align*}
&	\sup_{\Gamma \in \mathcal{S}_{{\rm adp}}^{1, {\rm op}}}  
  P\left( \omega \in \Omega\colon   P'\left(  
  \omega' \in \Omega'\colon  \norm{\left( \int_0^T \Gamma(\omega)\Psi_n(\omega) \, \;{\rm d}\widetilde{L}(\omega,\cdot)\right) (\omega')}> \epsilon
  \right)\leq \epsilon \right)\\
&\qquad\qquad  \geq P(A_n)\geq  1-\epsilon.
\end{align*}
Fubini's theorem implies for all $n\geq n_0$ and $\Gamma \in \mathcal{S}_{{\rm adp}}^{1, {\rm op}}$ that
\begin{align*}
    &(P\otimes P')\left((\omega, \omega')\in \Omega\times \Omega'\colon   \norm{\left(\int_0^T \Gamma \Psi_n \;{\rm d}\widetilde{L}\right)(\omega,\omega')}> \epsilon\right)\\
    &=\int_{\Omega} P'\left(\omega'\in \Omega':  \norm{\left(\int_0^T \Gamma(\omega) \Psi_n(\omega) \;{\rm d}\widetilde{L}(\omega,\cdot)\right)(\omega')}> \epsilon\right)\,P({\rm d}\omega)
    \leq \epsilon + \epsilon(1-\epsilon).
\end{align*}
As $\epsilon>0$ is arbitrary, we obtain
\begin{equation}\label{limit_sup}
    \lim_{n \rightarrow \infty}\sup_{\Gamma \in \mathcal{S}_{{\rm adp}}^{1, {\rm op}}}E_{P \otimes P'} \Bigg[\norm{\int_0^T \Gamma \Psi_n \;{\rm d}\widetilde{L}} \wedge 1\Bigg] = 0.
\end{equation}
By the ideal property of $L_2(G,H)$, for each $n \in \mathbb{N}$ the integrand $\Gamma \Psi_n$ lies in $\mathcal{S}_{\rm adp}^{\rm HS}$ and has a representation of the from
\begin{align}\label{eq.presentiation-GammaS}
\Gamma \Psi_n=\Gamma_0^n\Phi_0^n \mathbb{1}_{\{0\}}+\sum_{i=1}^{N(n)-1} \Gamma_i^n\Phi_i^n \mathbb{1}_{(t_i^n,t_{i+1}^n]},
\end{align}
where $0=t_1^n\leq \cdots <t_{N(n)}^n= T$, and  $\Gamma_i^n\Phi_i^n$ is an $\mathcal{F}_{t_i^n}$-measurable $L_2(G,H)$-valued random variable taking only finitely many values  for each $i = 0,...,N(n)-1$,.
Proposition \ref{dec_tan_seq} guarantees for each $n \in \mathbb{N}$ that the sequence of Radonified increments
\[\Big(\Gamma_i^n\Phi_i^n(L(t_{i+1}^n)-L(t_i^n))\Big)_{i=1,...,N_n-1}\]
has the decoupled tangent sequence
\[\Big(\Gamma_i^n\Phi_i^n (\widetilde{L}(t_{i+1}^n)-\widetilde{L}(t_i^n))\Big)_{i=1,...,N_n-1}.\]
We conclude from  the decoupling inequality \cite[Pr.\ 5.7.1.(ii)]{kwapien_woyczynski_1992} that there exists a constant $c>0$ such that, for all $n \in \mathbb{N}$ and $\Gamma \in \mathcal{S}_{{\rm adp}}^{1, {\rm op}}$, we have
\begin{align}
E_{P\otimes P'}\Bigg[\norm{\int_0^T \Gamma \Psi_n \;{\rm d}L}\wedge 1\Bigg]
&= E_{P\otimes P'}\Bigg[\norm{\sum_{i=1}^{N(n)-1} \Gamma_i^n\Phi_i^n (L(t_{i+1}^n)-L(t_i^n))}\wedge 1\Bigg] \nonumber\\
& \leq c E_{P\otimes P'}\Bigg[\norm{\sum_{i=1}^{N(n)-1} \Gamma_i^n\Phi_i^n (\widetilde{L}(t_{i+1}^n)-\widetilde{L}(t_i^n))} \wedge 1\Bigg]\nonumber \\
&=c E_{P\otimes P'}\Bigg[\norm{\int_0^T \Gamma \Psi_n \;{\rm d}\widetilde{L}}\wedge 1\Bigg].\label{decoupling_ineq}
\end{align}
We conclude from Remark \ref{product_expectation_remark} and \eqref{limit_sup} that 
\begin{equation*} 
 \lim_{n \rightarrow \infty}\sup_{\Gamma \in \mathcal{S}_{{\rm adp}}^{1, {\rm op}}} E_{P}\Bigg[\norm{\int_0^T \Gamma \Psi_n \;{\rm d}L}\wedge 1\Bigg]= \lim_{n \rightarrow \infty}\sup_{\Gamma \in \mathcal{S}_{{\rm adp}}^{1, {\rm op}}} E_{P\otimes P'}\Bigg[\norm{\int_0^T \Gamma \Psi_n \;{\rm d}L}\wedge 1\Bigg]=0, 
\end{equation*}
which shows (b). 

For establishing (b) $\Rightarrow$ (a), we assume the representation \eqref{eq.presentiation-GammaS} of $\Gamma \Psi_n$. We conclude from \cite[Pr.\ 5.7.2]{kwapien_woyczynski_1992} that there exists a constant $c>0$ such that
\begin{align}
    E_{P \otimes P'}\Bigg[\norm{\int_0^T \Gamma \Psi_n \;{\rm d}\widetilde{L}}\wedge 1\Bigg] 
    &= E_{P \otimes P'}\Bigg[\norm{\sum_{i=1}^{N(n)-1}\Gamma_i^n\Phi_i^n(\widetilde{L}(t_{i+1}^n)-\widetilde{L}(t_i^n))}\wedge 1\Bigg] \nonumber\\
    &\leq c \max_{\epsilon_i \in \{\pm1\}} E_{P \otimes P'}\Bigg[\norm{\sum_{i=1}^{N(n)-1}\epsilon_i \Gamma_i^n\Phi_i^n(L(t_{i+1}^n)-L(t_i^n))}\wedge 1\Bigg] \nonumber\\
    &=c \max_{\epsilon_i \in \{\pm1\}} E_{P}\Bigg[\norm{\sum_{i=1}^{N(n)-1}\epsilon_i\Gamma_i^n \Phi_i^n(L(t_{i+1}^n)-L(t_i^n))}\wedge 1\Bigg] \nonumber\\
    &\leq c \sup_{\Gamma \in \mathcal{S}_{{\rm adp}}^{1, {\rm op}}} E_{P}\Bigg[\norm{\sum_{i=1}^{N(n)-1}\Gamma_i^n\Phi_i^n(L(t_{i+1}^n)-L(t_i^n))}\wedge 1\Bigg] \nonumber\\
    &=c \sup_{\Gamma \in \mathcal{S}_{{\rm adp}}^{1, {\rm op}}} E_{P}\Bigg[\norm{\int_0^T \Gamma \Psi_n \;{\rm d}L}\wedge 1\Bigg].\label{main_proof_decoupl_ineq}
\end{align}
By choosing $\Gamma=\text{Id}\mathbb{1}_{\Omega \times (0,T]}$, the hypothesis on $(\Psi_n)_{n\in{\mathbb N}}$ implies 
 \[\lim_{n \rightarrow \infty} E_{P \otimes P'}\Bigg[\norm{\int_0^T \Psi_n \;{\rm d}\widetilde{L}}\wedge 1\Bigg] = 0. \]
It follows that there exists a subsequence $(\Psi_{n_k})_{k \in \mathbb{N}}$ of $(\Psi_n)_{n \in \mathbb{N}}$ and a set $N \subseteq \Omega \times \Omega'$ with $(P \otimes P')(N)=0$ satisfying 
\[\lim_{k \rightarrow \infty}\Bigg(\int_0^T \Psi_{n_k}\;{\rm d}\widetilde{L}\Bigg)(\omega,\omega')=0 \quad \text{for each}\; (\omega,\omega')\in N^c.\]
Define the section of the set $N$ for each $\omega \in \Omega$  by
\[N_{\omega}=\Bigg\{\omega' \in \Omega'\colon  \lim_{k \rightarrow \infty}\left(\int_0^T \Psi_{n_k}(\omega)\;{\rm d} \widetilde{L}(\omega,\cdot)\right)(\omega')\neq 0\Bigg\},\]
where we note that since $\Psi_{n_k}$ are adapted step processes it holds that
\[\Bigg(\int_0^T \Psi_{n_k}\;{\rm d}\widetilde{L}\Bigg)(\omega,\cdot)
=\int_0^T \Psi_{n_k}(\omega) \;{\rm d} \widetilde{L}(\omega,\cdot)\qquad\text{for all }\omega\in\Omega .\]
Fubini's theorem implies $0=(P \otimes P')(N)= \int_{\Omega}P'(N_{\omega}){\rm d}P(\omega)$,  from which it follows that there exists  $\Omega_1 \subseteq \Omega$ with $P(\Omega_1)=1$ such that $P'(N_{\omega})=0$  for all $\omega \in \Omega_1$. In other words, for each fixed $\omega \in \Omega_1$, the sequence of random variables
\[\Bigg(\int_0^T \Psi_{n_k}(\omega) \;{\rm d} \widetilde{L}(\omega,\cdot)\Bigg )_{k \in \mathbb{N}}\]
converges $P'$-a.s.\ to $0$ as $H$-valued random variables on $(\Omega',\mathcal{F}',P')$. Lemma~\ref{le.det-small-if-small} implies $\lim_{n \rightarrow \infty}\norm{\Psi_{n_k}(\omega)}_{L^{\alpha}}=0$  for each fixed $\omega \in \Omega_1$.  As $P(\Omega_1)=1$, Lebesgue's dominated convergence theorem implies
\[\lim_{n \rightarrow \infty}\normm{\Psi_{n_k}}_{L^{\alpha}}=\lim_{n \rightarrow \infty}\int_{\Omega} \Big(\norm{\Psi_{n_k}(\omega)}_{L^{\alpha}} \wedge 1\Big)\,P({\rm d}\omega) =0, \]
which completes the proof.
\end{proof}

\begin{proof} [Proof of Theorem \ref{pred_iff_integrable}.] 
	If $\Psi\in \mathcal{I}_{\rm prd}^{\rm HS}$ then Definition \ref{pred_integrability} guarantees the existence of a sequence $(\Psi_n)_{n \in \mathbb{N}}$ of elements of $\mathcal{S}_{\rm adp}^{\rm HS}$ converging $P_T$-a.e. to $\Psi$ and satisfying 
	\[\displaystyle \lim_{m,n \rightarrow \infty}\sup_{\Gamma \in \mathcal{S}_{{\rm adp}}^{1, {\rm op}}}E\Bigg[\norm{\int_0^T \Gamma(\Psi_m-\Psi_n) \;{\rm d}L}\wedge1 \Bigg]=0.\]
	 Lemma \ref{le.small-if-small} implies that $\lim_{m,n \rightarrow \infty}\normm{\Psi_m-\Psi_n}_{L^{\alpha}}=0$. Completeness of the space $L_P^0\big(\Omega,L_{{\rm Leb}}^{\alpha}\big([0,T],L_2(G,H)\big)\big)$ and the fact that $(\Psi_n)_{n \in \mathbb{N}}$ converges $P_T$-a.e. to $\Psi$ yields that the sequence $(\Psi_n)_{n \in \mathbb{N}}$ has a limit in $L_P^0\big(\Omega,L_{{\rm Leb}}^{\alpha}\big([0,T],L_2(G,H)\big)\big)$ and that this limit necessarily coincides with $\Psi$. Thus $\Psi\in L_P^0\big(\Omega,L_{{\rm Leb}}^{\alpha}\big([0,T],L_2(G,H)\big)\big)$. 
	
	To establish the converse inclusion, let $\Psi$ be a predictable process in the space  $L_P^0\big(\Omega,L_{{\rm Leb}}^{\alpha}\big([0,T],L_2(G,H)\big)\big)$. Lemma \ref{pred_density} guarantees that there exists a sequence $(\Psi_n)_{n \in \mathbb{N}}$ of elements of $\mathcal{S}_{\rm adp}^{\rm HS}$ converging to $\Psi$ in $\normm{\cdot}_{L^{\alpha}}$ and $P_T$-a.e. Consequently, $\lim_{m,n \rightarrow \infty}\normm{\Psi_m-\Psi_n}_{L^{\alpha}}=0$, which implies by Lemma \ref{le.small-if-small} that
	\[\displaystyle \lim_{m,n \rightarrow \infty}\sup_{\Gamma \in \mathcal{S}_{{\rm adp}}^{1, {\rm op}}}E\Bigg[\norm{\int_0^T \Gamma(\Psi_m-\Psi_n) \;{\rm d}L}\wedge1 \Bigg]=0.\]
	Thus $\Psi$ satisfies the conditions of Definition \ref{pred_integrability}, which means that $\Psi \in \mathcal{I}_{\rm prd}^{\rm HS}$.
\end{proof}

Lemma  \ref{le.small-if-small} is crucial to characterise the space of integrable adapted processes in Theorem \ref{pred_iff_integrable}, as it describes convergence of adapted step processes in the space of integrands in terms of convergence in the explicitly given space $L^\alpha$.  Having identified the space of integrable adapted processes, we can extend Lemma \ref{le.small-if-small} to the general class of integrable processes. 
\begin{corollary} \label{co.small-if-small}
	Let $L$ be the canonical $\alpha$-stable cylindrical L\'evy process  in $G$
	and $(\Psi_n)_{n\in{\mathbb N}}$ a sequence in $\mathcal{I}_{\rm prd}^{\rm HS}$. Then the following are 
	equivalent:
	\begin{enumerate}
		\item[{\rm (a)}] $\displaystyle \lim_{n\to\infty}\normm{\Psi_n}_{L^{\alpha}}=0$; 
		\item[{\rm (b)}] $\displaystyle \lim_{n\to\infty} \displaystyle\sup_{\Gamma \in \mathcal{S}_{{\rm adp}}^{1, {\rm op}}}E\Bigg[\norm{\int_0^T \Gamma \Psi_n \;{\rm d}L}\wedge 1 \Bigg]=0$. 
	\end{enumerate}
\end{corollary}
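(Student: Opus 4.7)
The plan is to extend Lemma \ref{le.small-if-small} from step-process sequences to $\mathcal{I}_{\rm prd}^{\rm HS}$ by approximating each $\Psi_n$ in $\normm{\cdot}_{L^{\alpha}}$ with a step process $\tilde{\Psi}_n\in \mathcal{S}_{\rm adp}^{\rm HS}$. Such an approximation is available by Lemma \ref{pred_density} applied to $\Psi_n\in L_P^0(\Omega, L^{\alpha})$, which holds by Theorem \ref{pred_iff_integrable}. I choose $\tilde{\Psi}_n$ with $\normm{\Psi_n-\tilde{\Psi}_n}_{L^{\alpha}}<1/n$; since $\normm{\cdot}_{L^{\alpha}}$ satisfies the triangle inequality, condition (a) is equivalent to $\normm{\tilde{\Psi}_n}_{L^{\alpha}}\to 0$. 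By linearity of the integral (which extends from $\mathcal{S}_{\rm adp}^{\rm HS}$ to $\mathcal{I}_{\rm prd}^{\rm HS}$ by passing to the limit in Definition \ref{pred_integrability}) and subadditivity of $x\mapsto x\wedge 1$, the quantity in (b) is bounded above by
\[ \sup_{\Gamma\in \mathcal{S}_{{\rm adp}}^{1, {\rm op}}} E\Big[\Big\|\int_0^T \Gamma\tilde{\Psi}_n\, {\rm d}L\Big\|\wedge 1\Big] + \sup_{\Gamma\in \mathcal{S}_{{\rm adp}}^{1, {\rm op}}} E\Big[\Big\|\int_0^T \Gamma(\Psi_n-\tilde{\Psi}_n)\, {\rm d}L\Big\|\wedge 1\Big],\]
and conversely the quantity with $\tilde{\Psi}_n$ in place of $\Psi_n$ is dominated by the quantity in (b) plus the same cross term. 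Hence both directions reduce to controlling the cross term uniformly in $\Gamma$.

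To handle the first summand and to transfer bounds, I extract two quantitative versions of Lemma \ref{le.small-if-small} by a standard contradiction argument: for every $\epsilon>0$ there is $\delta>0$ such that for any $\Xi\in \mathcal{S}_{\rm adp}^{\rm HS}$, the smallness of $\normm{\Xi}_{L^{\alpha}}$ implies smallness (uniform in $\Gamma$) of $E[\|\int \Gamma\Xi\, {\rm d}L\|\wedge 1]$, and vice versa. Applied to $\tilde{\Psi}_n$, this directly handles the first summand in each direction.

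The main technical step, and the main obstacle, is bounding $\sup_{\Gamma} E[\|\int \Gamma(\Psi_n-\tilde{\Psi}_n)\, {\rm d}L\|\wedge 1]$ purely in terms of $\normm{\Psi_n-\tilde{\Psi}_n}_{L^{\alpha}}$. I approximate $\Psi_n-\tilde{\Psi}_n\in \mathcal{I}_{\rm prd}^{\rm HS}$ by step processes $\Xi_n^{(k)}\in\mathcal{S}_{\rm adp}^{\rm HS}$ in $\normm{\cdot}_{L^{\alpha}}$ (and $P_T$-a.e.) via Lemma \ref{pred_density}. For each fixed $\Gamma$, the sequence $\Gamma \Xi_n^{(k)}\in \mathcal{S}_{\rm adp}^{\rm HS}$ is an admissible approximating sequence for $\Gamma(\Psi_n-\tilde{\Psi}_n)$ in the sense of Definition \ref{pred_integrability}: condition (2) there follows from Lemma \ref{le.small-if-small} applied to $\Gamma(\Xi_n^{(k)}-\Xi_n^{(j)})$, whose $L^{\alpha}$-metric tends to $0$ as $k,j\to\infty$ because $\normm{\Gamma(\Xi_n^{(k)}-\Xi_n^{(j)})}_{L^{\alpha}}\le\normm{\Xi_n^{(k)}-\Xi_n^{(j)}}_{L^{\alpha}}$. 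Consequently $\int_0^T \Gamma \Xi_n^{(k)}\, {\rm d}L\to \int_0^T\Gamma(\Psi_n-\tilde{\Psi}_n)\, {\rm d}L$ in probability as $k\to\infty$, and Fatou's lemma on $\|\cdot\|\wedge 1$ transfers the quantitative step-process bound to the limit. The decisive point is that the step-process bound on $\sup_{\Gamma}E[\|\int \Gamma \Xi_n^{(k)}\, {\rm d}L\|\wedge 1]$ is uniform in $\Gamma$, since the approximation $\Xi_n^{(k)}$ and its $L^{\alpha}$-size are chosen independently of $\Gamma$. With the cross term controlled, (a)$\Rightarrow$(b) follows by combining it with the quantitative bound for $\tilde{\Psi}_n$, and (b)$\Rightarrow$(a) follows by rearranging the decomposition, using the just-proven (a)$\Rightarrow$(b) direction on $\Psi_n-\tilde{\Psi}_n$, and applying the step-process direction of Lemma \ref{le.small-if-small} to $\tilde{\Psi}_n$.
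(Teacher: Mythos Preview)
Your proposal is correct and follows essentially the same approach as the paper: extract quantitative $\epsilon$--$\delta$ versions of Lemma~\ref{le.small-if-small}, approximate each $\Psi_n$ by step processes via Lemma~\ref{pred_density} and Theorem~\ref{pred_iff_integrable}, and pass to the limit using that for fixed $\Gamma$ the integral $\int_0^T \Gamma\,\Xi_n^{(k)}\,{\rm d}L$ converges in probability. The only difference is organisational: the paper works directly with a double-index approximation $\Psi_n^m$ and, for (b)$\Rightarrow$(a), invokes the Cauchy condition in Definition~\ref{pred_integrability} together with the reverse triangle inequality to compare $\sup_{\Gamma}E[\|\int\Gamma\Psi_n\,{\rm d}L\|\wedge 1]$ with $\sup_{\Gamma}E[\|\int\Gamma\Psi_n^m\,{\rm d}L\|\wedge 1]$; you instead isolate a single intermediate $\tilde{\Psi}_n$ and handle (b)$\Rightarrow$(a) by reusing the already-proved (a)$\Rightarrow$(b) direction on the cross term $\Psi_n-\tilde{\Psi}_n$, which is marginally tidier. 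One cosmetic remark: your appeal to Fatou is unnecessary, since convergence in probability is equivalent to convergence in the metric $E[\|\cdot\|\wedge 1]$, so $E[\|\int\Gamma\,\Xi_n^{(k)}\,{\rm d}L\|\wedge 1]$ converges outright to $E[\|\int\Gamma(\Psi_n-\tilde{\Psi}_n)\,{\rm d}L\|\wedge 1]$ for each fixed $\Gamma$.
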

\begin{proof}
 To establish the implication (a) $\Rightarrow$ (b) let $\epsilon>0$ be fixed. Lemma \ref{le.small-if-small} implies that there exists a $\delta(\epsilon)>0$ such that we have for all $\Psi \in \mathcal{S}_{\rm adp}^{\rm HS}$ the implication:
 \begin{equation}\label{epsilon_delta_implication}
     \normm{\Psi}_{L^{\alpha}}<\delta(\epsilon) \quad \Rightarrow \quad \displaystyle\sup_{\Gamma \in \mathcal{S}_{{\rm adp}}^{1, {\rm op}}}E\Bigg[\norm{\int_0^T \Gamma \Psi \;{\rm d}L}\wedge 1 \Bigg]<\epsilon.
 \end{equation}
 Since $\lim_{n\to\infty}\normm{\Psi_n}_{L^{\alpha}}=0$, there exists an $n_0 \in \mathbb{N}$ such that $\normm{\Psi_n}_{L^{\alpha}}<\tfrac{\delta(\epsilon)}{2}$  for all $n \geq n_0$. As $(\Psi_n)_{n \in \mathbb{N}} \subseteq L_P^0\big(\Omega,L_{{\rm Leb}}^{\alpha}\big([0,T],L_2(G,H)\big)\big)$,  Lemma \ref{pred_density} guarantees for each $n \in \mathbb{N}$ the existence of a sequence $(\Psi_n^m)_{m \in \mathbb{N}}\subseteq \mathcal{S}_{\rm adp}^{\rm HS}$ converging to $\Psi_n$ in $\normm{\cdot}_{L^{\alpha}}$ and $P_T$-a.e. Consequently, we can find $m_0(n,\epsilon) \in \mathbb{N}$ for each $n \in \mathbb{N}$ such that for all $m \geq m_0(n,\epsilon)$ we have 
$\normm{\Psi_n^m-\Psi_n}_{L^{\alpha}}< \tfrac{\delta(\epsilon)}{2}$.
We obtain for each $n\geq n_0$ and $m \geq m_0(n,\epsilon)$ that
 \[\normm{\Psi_n^m}_{L^{\alpha}}\leq \normm{\Psi_n^m-\Psi_n}_{L^{\alpha}}+\normm{\Psi_n}_{L^{\alpha}}< \delta(\epsilon),\]
which implies by (\ref{epsilon_delta_implication}) that
 \begin{equation}\label{eq_small_approx_int}
     \displaystyle\sup_{\Gamma \in \mathcal{S}_{{\rm adp}}^{1, {\rm op}}}E\Bigg[\norm{\int_0^T \Gamma \Psi_n^m \;{\rm d}L}\wedge 1 \Bigg]<\epsilon.
 \end{equation}
Thus, if we fix an $n\geq n_0$ and recall that the integral of $\Psi_n$ is defined to be the limit in probability of the integrals of $\Psi_n^m$ as $m\to\infty$, we obtain from Equation (\ref{eq_small_approx_int}) that
\begin{align*}
         \displaystyle\sup_{\Gamma \in \mathcal{S}_{{\rm adp}}^{1, {\rm op}}}E\Bigg[\norm{\int_0^T \Gamma \Psi_n \;{\rm d}L}\wedge 1 \Bigg] &= \sup_{\Gamma \in \mathcal{S}_{{\rm adp}}^{1, {\rm op}}} \lim_{m \rightarrow \infty}E\Bigg[\norm{\int_0^T \Gamma \Psi_n^m \;{\rm d}L}\wedge 1 \Bigg] \\
         &\leq \lim_{m \rightarrow \infty} \sup_{\Gamma \in \mathcal{S}_{{\rm adp}}^{1, {\rm op}}}E\Bigg[\norm{\int_0^T \Gamma \Psi_n^m \;{\rm d}L}\wedge 1 \Bigg]< \epsilon.
\end{align*}
 
To establish the reverse implication (b) $\Rightarrow$ (a), let $\epsilon>0$ be fixed. Lemma \ref{le.small-if-small} implies that there exists a $\delta(\epsilon)>0$ such that we have for all $\Psi \in \mathcal{S}_{\rm adp}^{\rm HS}$ the implication: 
 \begin{equation}\label{reverse_epsilon_delta_implication}
     \displaystyle\sup_{\Gamma \in \mathcal{S}_{{\rm adp}}^{1, {\rm op}}}E\Bigg[\norm{\int_0^T \Gamma \Psi \;{\rm d}L}\wedge 1 \Bigg]<\delta(\epsilon) \quad \Rightarrow \quad \normm{\Psi}_{L^{\alpha}}<\tfrac{\epsilon}{2}.
 \end{equation}
 By assumption, there exists an $n_0 \in \mathbb{N}$ such that for all $n \geq n_0$ we have
 \begin{equation} \label{eq.small_integral}
      \displaystyle\sup_{\Gamma \in \mathcal{S}_{{\rm adp}}^{1, {\rm op}}}E\Bigg[\norm{\int_0^T \Gamma \Psi_n \;{\rm d}L}\wedge 1 \Bigg]<\tfrac{\delta(\epsilon)}{2}.
 \end{equation}
 As $(\Psi_n)_{n \in \mathbb{N}} \subseteq \mathcal{I}_{\rm prd}^{\rm HS}$, it follows from Theorem \ref{pred_iff_integrable} and Lemma \ref{pred_density} that for each $n \in \mathbb{N}$  there exists a sequence $(\Psi_n^m)_{m \in \mathbb{N}}$ of elements of $\mathcal{S}_{\rm adp}^{\rm HS}$ converging to $\Psi_n$ in $\normm{\cdot}_{L^{\alpha}}$ and $P_T$-a.e. Consequently,  we can find $m_0(n,\epsilon) \in \mathbb{N}$  for each $n \in \mathbb{N}$, such that for all $m \geq m_0(n,\epsilon)$ we have
 \begin{equation} \label{eq.process_dif_is_small}
     \normm{\Psi_n^m-\Psi_n}_{L^{\alpha}}< \epsilon/2
 \end{equation}
Definition \ref{pred_integrability} shows that for each $n \in \mathbb{N}$ there exists an $m_1(n,\epsilon) \in \mathbb{N}$ such that for all $m \geq m_1(n,\epsilon)$ we have by the reverse triangle inequality that
 \begin{equation}\label{eq.integrals_close}
     \left \vert\displaystyle\sup_{\Gamma \in \mathcal{S}_{{\rm adp}}^{1, {\rm op}}}E\Bigg[\norm{\int_0^T \Gamma \Psi_n \;{\rm d}L}\wedge 1 \Bigg]-\displaystyle\sup_{\Gamma \in \mathcal{S}_{{\rm adp}}^{1, {\rm op}}}E\Bigg[\norm{\int_0^T \Gamma \Psi_n^m \;{\rm d}L}\wedge 1 \Bigg]\right\vert<\tfrac{\delta(\epsilon)}{2}.
 \end{equation}
Combining (\ref{eq.small_integral}) and (\ref{eq.integrals_close}) shows for  $n\geq n_0$ and $m \geq \max\{m_0(n,\epsilon),m_1(n,\epsilon)\}$ that
 \[\sup_{\Gamma \in \mathcal{S}_{{\rm adp}}^{1, {\rm op}}}E\Bigg[\norm{\int_0^T \Gamma \Psi_n^m \;{\rm d}L}\wedge 1 \Bigg]<\delta(\epsilon),\]
 which implies by (\ref{reverse_epsilon_delta_implication}) and (\ref{eq.process_dif_is_small}) that
 \[\normm{\Psi_n}_{L^{\alpha}}\leq \normm{\Psi_n-\Psi_n^m}_{L^{\alpha}}+\normm{\Psi_n^m}_{L^{\alpha}}<\epsilon. \]
 As $\epsilon>0$ was arbitrary, this concludes the proof.
\end{proof}

\begin{theorem}\label{th_semimartingale}
If $\Psi \in \mathcal{I}_{\rm prd}^{\rm HS}$, then the integral process $\left(I(\Psi)(t): t \in [0,T]\right)$ defined  by
\begin{align*}
    I(\Psi)(t):=\int_0^T \1_{[0,t]}(s) \Psi(s)\, L({\rm d} s)
    \qquad\text{for }t\in [0,T], 
\end{align*}
is a semi-martingale.
\end{theorem}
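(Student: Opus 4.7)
We aim to verify that $X:=I(\Psi)$ meets the Bichteler--Dellacherie-style characterisation of an $H$-valued semi-martingale as a \emph{good integrator} (see e.g.\ M\'etivier and Pellaumail \cite{metivier_michel_pellaumail_1980} or Protter \cite{protter_2005}): $X$ is adapted with c\`adl\`ag paths, and the family of elementary integrals
\[
\biggl\{\sum_{i=0}^{n-1} \Gamma_{t_i}\bigl(X(t_{i+1})-X(t_i)\bigr) : \Gamma\in \mathcal{S}_{\rm adp}^{1,\rm op}\biggr\}
\]
is bounded in $L_P^0(\Omega,H)$. Adaptedness is immediate, so two points remain: the construction of a c\`adl\`ag modification, and the boundedness in probability of the elementary integrals.

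For the c\`adl\`ag construction, choose by Lemma \ref{pred_density} a sequence $(\Psi_n)_{n\in\mathbb N}\subseteq \mathcal{S}_{\rm adp}^{\rm HS}$ with $\normm{\Psi_n-\Psi}_{L^\alpha}\to 0$. Each $I(\Psi_n)(\cdot)$ is a finite sum of Radonified increments of $L$ and thus has c\`adl\`ag paths. The key step is to establish $\sup_{t\in[0,T]}\norm{I(\Psi_n-\Psi_m)(t)}\to 0$ in probability as $m,n\to\infty$; a subsequence then converges uniformly almost surely and yields a c\`adl\`ag modification of $I(\Psi)$. To this end, lift the step integrand $(\Psi_n-\Psi_m)$ to the enlarged probability space of Proposition \ref{dec_tan_seq} and consider the decoupled integral process $\tilde J(t):=\int_0^t(\Psi_n-\Psi_m)\,d\widetilde L$. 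For fixed $\omega\in\Omega$ the integrand $(\Psi_n-\Psi_m)(\omega,\cdot)$ is deterministic, so $\tilde J(\omega,\cdot,\cdot)$ has independent symmetric increments on $(\Omega',\mathcal F',P')$. L\'evy's maximal inequality applied $\omega$-wise and integrated over $\Omega$ gives
\begin{equation*}
(P\otimes P')\biggl(\sup_{0\le t\le T}\norm{\tilde J(t)}>\epsilon\biggr)\le 2\,(P\otimes P')\Bigl(\norm{\tilde J(T)}>\tfrac{\epsilon}{2}\Bigr).
\end{equation*}
The decoupling inequality \cite[Pr.\ 5.7.1]{kwapien_woyczynski_1992} combined with Lemma \ref{le.small-if-small} makes the right-hand side vanish as $m,n\to\infty$; a maximal version of the same decoupling inequality then transfers the estimate back to the original process $I(\Psi_n-\Psi_m)$ on $(\Omega,P)$, delivering the desired uniform convergence in probability.

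For the boundedness of the elementary integrals, linearity on $\mathcal{S}_{\rm adp}^{\rm HS}$ together with the continuity of the integral extend the identity $\sum_i \Gamma_{t_i}\bigl(X(t_{i+1})-X(t_i)\bigr)=\int_0^T\Gamma\Psi\,{\rm d}L$ to all $\Gamma\in \mathcal{S}_{\rm adp}^{1,\rm op}$. Since $\norm{\Gamma(\omega,t)\Psi(\omega,t)}_{\rm HS}\le\norm{\Psi(\omega,t)}_{\rm HS}$ pointwise, we obtain $\normm{\Gamma\Psi}_{L^\alpha}\le\normm{\Psi}_{L^\alpha}<\infty$ uniformly in $\Gamma$. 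If the family of elementary integrals were unbounded in probability, we could extract $K_n\uparrow\infty$, $\Gamma_n\in\mathcal{S}_{\rm adp}^{1,\rm op}$ and $\epsilon>0$ with $P\bigl(\norm{\int_0^T\Gamma_n\Psi\,{\rm d}L}>K_n\bigr)>\epsilon$. The rescaled integrands $\widetilde\Psi_n:=K_n^{-1}\Gamma_n\Psi$ lie in $\mathcal{I}_{\rm prd}^{\rm HS}$ and satisfy $\normm{\widetilde\Psi_n}_{L^\alpha}\to 0$ by dominated convergence, so Corollary \ref{co.small-if-small} applied with $\Gamma=\operatorname{Id}\cdot\mathbb{1}_{(0,T]}$ forces $\int_0^T\widetilde\Psi_n\,{\rm d}L\to 0$ in probability, contradicting the choice of $\Gamma_n$ and $K_n$.

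The main technical obstacle is the c\`adl\`ag step, as neither $L$ nor $I(\Psi)$ is a classical semi-martingale and the standard Doob- or Burkholder-type maximal inequalities are unavailable; the crux is pairing L\'evy's maximal inequality with the conditional independence of the decoupled tangent sequence from Proposition \ref{dec_tan_seq}. Once the c\`adl\`ag modification is in hand, the semi-martingale property is a direct consequence of the good-integrator characterisation together with Corollary \ref{co.small-if-small}.
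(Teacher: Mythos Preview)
Your boundedness-in-probability argument is correct and matches the paper's proof almost verbatim: both rewrite $\int_0^T\Gamma\,{\rm d}I(\Psi)=\int_0^T\Gamma\Psi\,{\rm d}L$, argue by contradiction, rescale by $K_n^{-1}$ (the paper uses $n^{-1}$), and invoke Corollary~\ref{co.small-if-small}. The difference is in the criterion. The paper does not appeal to a Bichteler--Dellacherie good-integrator characterisation requiring c\`adl\`ag paths as a hypothesis; it invokes \cite[Th.\ 2.1]{radonif_by_single}, under which boundedness in probability of the set $\{\int_0^T\Gamma\,{\rm d}I(\Psi):\Gamma\in\mathcal{S}_{\rm adp}^{1,\rm op}\}$ alone already yields the semi-martingale property. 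No separate c\`adl\`ag verification is carried out.

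Your c\`adl\`ag argument, which therefore becomes extra work rather than a necessary step, has a genuine gap. The assertion ``a maximal version of the same decoupling inequality then transfers the estimate back to the original process $I(\Psi_n-\Psi_m)$'' is not substantiated: the inequalities \cite[Pr.\ 5.7.1 and 5.7.2]{kwapien_woyczynski_1992} you cite compare \emph{terminal} sums, not running maxima. Controlling $\sup_{t\in[0,T]}\norm{I(\Psi_n-\Psi_m)(t)}$ by the decoupled maximal process requires a maximal decoupling inequality in the $H$-valued, $L^0$-topology setting, and you neither cite one precisely nor indicate why it applies here. (Also minor: L\'evy's inequality gives a bound by $2\,(P\otimes P')(\norm{\tilde J(T)}>\epsilon)$, not $\epsilon/2$; and showing that the right-hand side vanishes uses the intermediate step \eqref{limit_sup} in the proof of Lemma~\ref{le.small-if-small} rather than \cite[Pr.\ 5.7.1]{kwapien_woyczynski_1992}.) The paper's route through \cite{radonif_by_single} sidesteps this difficulty entirely, which is the main economy of its argument.
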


\begin{proof}
By \cite[Th.\ 2.1]{radonif_by_single}, it suffices to show that the set
\begin{align*}
    \left\{ \int_0^T \Gamma \, {\rm d}I(\Psi) : \Gamma \in \mathcal{S}_{\rm adp}^{1,\rm{op}} \right\}
\end{align*}
is bounded in probability. Suppose, aiming for a contradiction, that it is not the case. Then there exists an $\epsilon>0$ and a sequence $(\Gamma_n)_{n \in \mathbb{N}}\subseteq \mathcal{S}_{\rm adp}^{1,\rm{op}}$ satisfying for all $n \in \mathbb{N}$ that
\begin{align}\label{eq.contradiction}
    P\left(\norm{\int_0^T \Gamma_n \, {\rm d}I(\Psi)}>n\right)\geq \epsilon.
\end{align}
For each $\Psi \in \mathcal{S}_{\rm adp}^{\rm HS}$ and $\Gamma \in \mathcal{S}_{\rm adp}^{1,\rm{op}}$, the very definitions of stochastic integrals show 
\begin{equation*}
    \int_0^T \Gamma \, {\rm d}I(\Psi) = \int_0^T \Gamma \Psi \, {\rm d}L. 
\end{equation*}
This equality can be generalised to arbitrary $\Psi \in \mathcal{I}_{\rm prd}^{\rm HS}$ and $\Gamma \in \mathcal{S}_{\rm adp}^{1,\rm{op}}$ by a standard approximation argument. Using this to rewrite Equation (\ref{eq.contradiction}), we obtain for all $n \in \mathbb{N}$ that
\begin{align}\label{eq.bounded_set}
    \epsilon \leq P\left(\norm{\int_0^T \Gamma_n \, {\rm d}I(\Psi)}>n\right) 
    = P\left(\norm{\int_0^T \frac{1}{n}\Gamma_n \Psi \, {\rm d}L}>1\right).
\end{align}
On the other hand, since $\normm{\frac{1}{n}\Gamma_n \Psi}_{L^{\alpha}} \rightarrow 0$ as $n \rightarrow \infty$,  Corollary \ref{co.small-if-small} implies
\begin{align*}
    \lim_{n \rightarrow \infty}E\left[ \norm{\int_0^T \frac{1}{n}\Gamma_n \Psi \, {\rm d}L} \wedge 1 \right]=0,
\end{align*}
which contradicts (\ref{eq.bounded_set}) because of the equivalent characterisation of the topology in $L^0_P(\Omega,H)$.
\end{proof}

We finish this section with a stochastic dominated convergence theorem.

\begin{theorem} \label{stoch_dom_conv}
	Let $(\Psi_n)_{n \in \mathbb{N}}$ be a sequence of processes in $\mathcal{I}_{\rm prd}^{\rm HS}$ such that
	\begin{enumerate}[\rm (1)]
		\item \label{thm_cnd1'} $(\Psi_n)_{n \in \mathbb{N}}$ converges $P_T$-a.e.\ to an $L_2(G,H)$-valued predictable process $\Psi$;
		\item \label{thm_cnd2'} there exists a process $\Upsilon \in \mathcal{I}_{\rm prd}^{\rm HS}$ satisfying for all $n \in \mathbb{N}$ that
		\[\norm{\Psi_n(\omega,t)}_{\rm HS} \leq \norm{\Upsilon(\omega,t)}_{\rm HS} \quad  \text{for $P_T$-a.a.\ $(\omega,t)\in \Omega\times [0,T]$.} \]
	\end{enumerate}
	Then it follows that $\Psi \in \mathcal{I}_{\rm prd}^{\rm HS}$ and 
	\[\lim_{n \rightarrow \infty}P\Bigg(\sup_{t \in [0,T]}\norm{\int_0^t \Psi_n \;{\rm d}L-\int_0^t \Psi \;{\rm d}L}>\epsilon\Bigg)=0
	\qquad\text{for all } \epsilon>0.\]
\end{theorem}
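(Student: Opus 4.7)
My approach has four stages. \emph{First}, I verify $\Psi\in \mathcal{I}_{\rm prd}^{\rm HS}$. Since $\Upsilon\in \mathcal{I}_{\rm prd}^{\rm HS}$, Theorem \ref{pred_iff_integrable} gives $\int_0^T\norm{\Upsilon(\omega,t)}_{\rm HS}^\alpha\,\d t<\infty$ for $P$-almost every $\omega$. Passing to the limit in the pointwise domination yields $\norm{\Psi(\omega,t)}_{\rm HS}\le\norm{\Upsilon(\omega,t)}_{\rm HS}$ for $P_T$-almost every $(\omega,t)$, so $\Psi\in L^0_P(\Omega,L_{\rm Leb}^\alpha([0,T],L_2(G,H)))$; predictability of $\Psi$ is in the hypothesis, so Theorem \ref{pred_iff_integrable} again delivers $\Psi\in \mathcal{I}_{\rm prd}^{\rm HS}$. \emph{Second}, I show $\normm{\Psi_n-\Psi}_{L^\alpha}\to 0$. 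For each $\omega$ in a full-measure set, $\Psi_n(\omega,\cdot)\to\Psi(\omega,\cdot)$ Lebesgue a.e., the majorant $C_\alpha\norm{\Upsilon(\omega,\cdot)}_{\rm HS}^\alpha$ is Lebesgue integrable, and the classical dominated convergence theorem delivers $\norm{\Psi_n(\omega,\cdot)-\Psi(\omega,\cdot)}_{L^\alpha}\to 0$. Bounded convergence on $\Omega$ then produces $\normm{\Psi_n-\Psi}_{L^\alpha}\to 0$.

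\emph{Third}, since $\mathcal{I}_{\rm prd}^{\rm HS}$ is linear, Corollary \ref{co.small-if-small} applies to $\Psi_n-\Psi$ and converts the previous step into
\begin{equation*}
    \lim_{n\to\infty}\sup_{\Gamma\in \mathcal{S}_{{\rm adp}}^{1,{\rm op}}}E\Bigg[\norm{\int_0^T \Gamma(\Psi_n-\Psi)\,\d L}\wedge 1\Bigg]=0.
\end{equation*}
Setting $Z_n(t):=\int_0^t(\Psi_n-\Psi)\,\d L$ and specialising to $\Gamma=\text{Id}\,\mathbbm{1}_{[0,t]}\in \mathcal{S}_{{\rm adp}}^{1,{\rm op}}$ yields $\sup_{t\in[0,T]}P(\norm{Z_n(t)}>\eta)\to 0$ for every $\eta>0$. \emph{Fourth}, I upgrade pointwise to uniform in $t$. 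By Theorem \ref{th_semimartingale} each $Z_n$ admits a c\`adl\`ag modification, so $\sup_{t\in[0,T]}\norm{Z_n(t)}=\lim_N\sup_{t\in D_N}\norm{Z_n(t)}$ $P$-a.s.\ along any increasing sequence of finite sets $D_N$ whose union is dense in $[0,T]$. The plan is to invoke an Ottaviani--L\'evy-type maximal inequality, as in Remark \ref{cadlag_remark}, to bound $P(\sup_{t\in D_N}\norm{Z_n(t)}>\epsilon)$ by $3\sup_{t\in[0,T]}P(\norm{Z_n(t)}>\epsilon/3)$ uniformly in $N$, then to pass to the limit in $N$ via monotone convergence before letting $n\to\infty$.

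The critical obstacle is justifying the maximal inequality in the last step for random integrands. When the integrand is deterministic the integral has independent increments and the classical Ottaviani--L\'evy inequality applies directly; here $Z_n$ lacks this property. I would circumvent this by exploiting the decoupling construction of Proposition \ref{dec_tan_seq}: on the enlarged space the Radonified increments of the decoupled $\alpha$-stable process $\widetilde{L}$ are, conditionally on the first coordinate, genuinely independent, so the classical maximal inequality governs $\widetilde Z_n(t):=\int_0^t(\Psi_n-\Psi)\,\d\widetilde L$. The decoupling inequalities from \cite[Pr.\ 5.7.1.(ii) and Pr.\ 5.7.2]{kwapien_woyczynski_1992} invoked in Lemma \ref{le.small-if-small} then let me compare $\sup_t\norm{Z_n(t)}$ with $\sup_t\norm{\widetilde Z_n(t)}$ up to a universal multiplicative constant, which is sufficient for the qualitative conclusion.
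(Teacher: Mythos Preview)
Your first three stages coincide with the paper's argument almost verbatim: Fubini plus the domination give pathwise $L^\alpha$-membership, Theorem \ref{pred_iff_integrable} yields $\Psi\in\mathcal{I}_{\rm prd}^{\rm HS}$, two applications of dominated convergence give $\normm{\Psi_n-\Psi}_{L^\alpha}\to 0$, and Corollary \ref{co.small-if-small} converts this into
\[
\lim_{n\to\infty}\sup_{\Gamma\in\mathcal{S}_{\rm adp}^{1,{\rm op}}}E\Bigg[\norm{\int_0^T\Gamma(\Psi_n-\Psi)\,\d L}\wedge 1\Bigg]=0.
\]

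Where you diverge is in stage four. The paper simply observes that the display above \emph{is} convergence of $I(\Psi_n)$ to $I(\Psi)$ in the semi-martingale (Emery) topology, and then invokes the standard fact that this topology is finer than the ucp topology. That one sentence finishes the proof. Your detour through decoupling and Ottaviani--L\'evy is unnecessary, and as written it has a gap: the decoupling inequalities you cite, \cite[Pr.\ 5.7.1(ii), Pr.\ 5.7.2]{kwapien_woyczynski_1992}, compare $E[\Phi(\norm{\sum_i X_i})]$ with $E[\Phi(\norm{\sum_i Y_i})]$ for a fixed terminal sum, not the running maxima $\sup_k\norm{\sum_{i\le k}X_i}$ against $\sup_k\norm{\sum_{i\le k}Y_i}$. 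So your proposed comparison of $\sup_t\norm{Z_n(t)}$ with $\sup_t\norm{\widetilde Z_n(t)}$ is not justified by those results.

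If you prefer to avoid citing the semi-martingale topology, there is a much shorter fix than decoupling. For any finite set $D\subseteq[0,T]$, the first-exit time $\tau:=\inf\{s\in D:\norm{Z_n(s)}>\epsilon\}\wedge T$ is a stopping time taking finitely many values, and $\Gamma:=\text{Id}\,\mathbbm{1}_{(0,\tau]}$ belongs to $\mathcal{S}_{\rm adp}^{1,{\rm op}}$. Since $\int_0^T\Gamma(\Psi_n-\Psi)\,\d L=Z_n(\tau)$ and $\norm{Z_n(\tau)}>\epsilon$ on $\{\max_{s\in D}\norm{Z_n(s)}>\epsilon\}$, one obtains
\[
P\Big(\max_{s\in D}\norm{Z_n(s)}>\epsilon\Big)\le \tfrac{1}{\epsilon\wedge 1}\,\sup_{\Gamma\in\mathcal{S}_{\rm adp}^{1,{\rm op}}}E\Big[\norm{\textstyle\int_0^T\Gamma(\Psi_n-\Psi)\,\d L}\wedge 1\Big],
\]
uniformly in $D$, and the c\`adl\`ag paths let you pass to the full supremum. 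This is essentially the content of ``semi-martingale convergence $\Rightarrow$ ucp'' and replaces your entire fourth stage.
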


\begin{proof}
By assumption, there exists a set $N \subseteq \Omega \times [0,T]$ with $P_T(N)=0$ such that  $\lim_{n \rightarrow \infty} \Psi_n(\omega,t)=\Psi(\omega,t)$ and  $\norm{\Psi_n(\omega,t)}_{\rm HS} \leq \norm{\Upsilon(\omega,t)}_{\rm HS}$ for all $(\omega,t)\in N^c$ and $n \in \mathbb{N}$. Fubini's theorem yields that
\[0=P_T(N)=\int_\Omega {\rm Leb}\vert_{[0,T]}(N_\omega)\,P({\rm d}\omega),\]
where
\[N_\omega:=\left\{t\in [0,T]: \lim_{n \rightarrow \infty} \Psi_n(\omega,t)\neq\Psi(\omega,t)\}\,\text{or}\, \norm{\Psi_n(\omega,t)}_{\rm HS} > \norm{\Upsilon(\omega,t)}_{\rm HS}\right\}.\]
It follows that there exists an $\Omega_1 \subseteq \Omega$ with $P(\Omega_1)=1$ such that ${\rm Leb}\vert_{[0,T]}(N_{\omega})=0$ for all $\omega \in \Omega_1$. Consequently, for each $\omega \in \Omega_1$ we have $\norm{\Psi_n(\omega,t)}_{\rm HS}\leq \norm{\Upsilon(\omega,t)}_{\rm HS}$ and $\lim_{n \rightarrow \infty} \Psi_n(\omega,t)=\Psi(\omega,t)$ for Lebesgue almost every $t \in [0,T]$.  Theorem \ref{pred_iff_integrable} guarantees 
that  there exists $\Omega_2 \subseteq \Omega$ with $P(\Omega_2)=1$ such that $\norm{\Upsilon(\omega,\cdot)}_{L^{\alpha}}<\infty$  for all $\omega \in \Omega_2$. The classical Lebesgue's dominated convergence theorem implies that $(\Psi_n(\omega))_{n \in \mathbb{N}}$ converges in $\norm{\cdot}_{L^{\alpha}}$  and its limit  $\Psi(\omega,\cdot)$ is in $L^\alpha$ for all $\omega \in \Omega_1 \cap \Omega_2$.  Since $P(\Omega_1 \cap \Omega_2)=1$, Theorem \ref{pred_iff_integrable} shows $\Psi \in \mathcal{I}_{\rm prd}^{\rm HS}$. 

Another application of Lebesgue's dominated convergence theorem establishes that  $\lim_{n \rightarrow \infty}\normm{\Psi_n-\Psi}_{L^{\alpha}}=0$.  Corollary \ref{co.small-if-small} implies
\begin{align} \label{eq_conv_to_0}
    \displaystyle \lim_{n\to\infty} \displaystyle\sup_{\Gamma \in \mathcal{S}_{{\rm adp}}^{1, {\rm op}}}E\Bigg[\norm{\int_0^T \Gamma (\Psi_n-\Psi) \;{\rm d}L}\wedge 1 \Bigg]=0,
\end{align}
which means that the sequence $\left(I(\Psi_n)\right)_{n \in \mathbb{N}}$ of processes converges in the semi-martingale topology to the process $I(\Psi)$. Since convergence in the semi-martingale topology implies convergence in probability on compact time intervals, we have
\[\lim_{n \rightarrow \infty}P\Bigg(\sup_{t \in [0,T]}\norm{\int_0^t \Psi_n \;{\rm d}L-\int_0^t \Psi \;{\rm d}L}>\epsilon\Bigg)=0
\qquad\text{for all } \epsilon>0,\]
which completes the proof.
\end{proof}

\end{document}